\definecolor{lightblue}{rgb}{0.5,0.5,1.0}
\definecolor{darkred}{rgb}{0.5,0,0}
\definecolor{darkgreen}{rgb}{0,0.5,0}
\definecolor{darkblue}{rgb}{0,0,0.5}
\newtheorem{theorem}{Theorem}
\newtheorem{lemma}[theorem]{Lemma}
\newtheorem{corollary}[theorem]{Corollary}
\newtheorem{definition}[theorem]{Definition}
\theoremstyle{definition}
\theoremstyle{remark}
\newtheorem*{note*}{Note}
\newtheorem*{remark*}{Remark}
\title{Line Planning in Public Transport: Bypassing Line Pool Generation}
\author{
	Irene Heinrich\\
	\texttt{TU Darmstadt}
	\and
	Philine Schiewe\\
	\texttt{TU Kaiserslautern}
	\and
	Constantin Seebach\\
	\texttt{TU Kaiserslautern}
}
\date{}
\DeclareMathOperator{\cost}{cost}
\newcommand{\cfix}{c_{\text{fix}}}
\newcommand{\dfix}{d_{\text{fix}}}
\newcommand{\fmin}{f^{\min}}
\newcommand{\fmint}{\bar{f}^{\min}}
\newcommand{\fmax}{f^{\max}}
\newcommand{\femin}{f_e^{\min}}
\newcommand{\femax}{f_e^{\max}}
\newcommand{\cL}{\mathcal{L}}
\newcommand{\ftotal}{F^{(\cL,f)}}
\newcommand{\LPAL}{\textup{(LPAL)}}
\newcommand{\PMPP}{\textup{(PMPP)}}
\newcommand{\elist}{E_\textup{list}}
\newcommand{\lineends}[1]{%
	\ensuremath{\eta_{#1}}%
}
\newcommand{\feasible}{\mathcal{F}}
\begin{document}

\maketitle
\thispagestyle{empty}
\begin{abstract}
Line planning, i.e.\ choosing paths which are operated by one vehicle end-to-end, is an important aspect of public transport planning. While there exists heuristic procedures for generating lines from scratch, most theoretical observations consider the problem of choosing lines from a predefined line pool. In this paper, we consider the complexity of the line planning problem when all simple paths can be used as lines. Depending on the cost structure, we show that the problem can be NP-hard even for paths and stars and that no polynomial time approximation of sub-linear performance is possible. Additionally, we identify polynomially solvable cases and present a pseudo-polynomial solution approach for trees.
\end{abstract}

\newpage
\setcounter{page}{1}

\section{Introduction}
\label{sec:introduction}
In public transport planning, \emph{lines} are important
 building blocks. As lines are (simple) paths in the public transport network that have to be covered by one vehicle end-to-end, they highly influence the subsequent steps like timetabling and vehicle scheduling, see \cite{guihaire2008transit}. On the one hand, lines influence the passengers by providing routes and transfers and on the other hand, they determine the majority of the operating costs.  Thus, line planning is an important foundation for building a public transport supply. From a set of lines, the
 \emph{line pool}, a subset of lines and their frequencies, called \emph{line concept}, is chosen for operation. While there is ample literature on line planning for a given fixed line pool, see \cite{schobel2012line}, the construction of
 line pools is often neglected. In this paper, we focus on designing line concepts without a given line pool. Instead, we consider the set of all simple paths as
 candidates thus extending the solution space. We show that depending on the cost-structure, the problem is NP-hard even for simple graph classes and that polynomial time approximations cannot give a performance guarantee that is better than linear, assuming $P\neq NP$. Additionally, we identify polynomially solvable cases and develop a pseudo-polynomial 
 algorithm
 for trees. 

%\section{Literature Review}
%\label{sec:literature}

\medskip
\noindent
\textbf{Literature review.}
%The problem of
Planning lines in public transport is extensively researched.
% in the literature.
Many %heuristic and 
(meta-)heuristic approaches exist for the
%(bus)
transit network design problem, %see
%for surveys
where lines and often also passenger routes are generated~\cite{kepaptsoglou2009transit,farahani2013review}. Usually, lines are supposed to not deviate too much from shortest paths
% as in 
\cite{arbex2015efficient,CANCELA201517}, or a set of lines to choose from is precomputed
% as in
\cite{wan2003mixed}. 

There is ample literature on line planning for a given line pool, i.e.\ a set from which lines are chosen for operation, see \cite{schobel2012line}.
% for a survey.
Most models focus either on the passengers' or the operator's perspective.
%From the passengers' point of view, 
The most important objectives
for %the 
passenger are to maximize the number of direct travelers
%, see e.g.\ 
\cite{bussieck1997optimal} or to minimize the travel time %as in
\cite{schobel2006line,Bull2018}. Here, it is especially difficult to model passenger behavior realistically, see \cite{goerigk2017line,schiewe2018line}.	
		
In this paper we focus on the operator's perspective, i.e.\ on minimizing the costs as originally introduced in \cite{claessens1998}. As in \cite{SAHIN2020102726,torres_et_al:OASIcs:2008:1583,torres2011line}, we distinguish between frequency-dependent and frequency-independent costs. Frequency-dependent costs can include costs for the distance covered by the lines and for the number of vehicles needed to operate the given line plan while frequency-independent costs can e.g.~be used to reduce the number of different lines operated.

	When solving the line planning problem for a fixed line pool, the line pool has a large influence on the complexity of the problem and the quality of solutions. One approach is to handle the generation of a suitable pool as an optimization problem itself, see \cite{GHS16}. Another possibility is to solve the line planning problem on the set of all possible lines. This idea has been studied using a column-generation approach in \cite{borndorfer2007column} where lines are only allowed to start and end at terminal stations. For this case, the line planning problem was shown to be NP-hard on planar graphs.
	%Some further
	See~\cite{borndorfer2018line,torres2011line} for further results on the complexity of the problem using terminal stations in path networks representing Istanbul Metrob{\"u}s and trees representing the Quito Troleb{\'u}s. In \cite{PaeSchiSch18}, an integer programming formulation which includes line planning on all lines is presented and applied to small instances while in \cite{masing2022price}, line planning on all \emph{circular} lines in a specific class of graphs is considered. 
	
	\medskip
	\noindent
	%In this paper,
	\textbf{Our contribution.}
	We focus on
	%the complexity of
	the line planning problem on all simple paths depending on the cost structure. We show that this problem is NP-hard %in general and
	even on planar graphs both when considering only frequency-dependent costs and when considering frequency-independent costs. 
	The inclusion of frequency-independent costs makes the problem NP-hard even on paths and stars. Considering only frequency-dependent costs, we identify both polynomially solvable and NP-hard cases. We show that the problem is also hard to solve approximately in polynomial time: A sub-linear approximation ratio would imply $P=NP$, and even with another simplification of the problem, no constant approximation ratio is possible unless $P=NP$. Additionally, we present a pseudo-polynomial
	%solution approach
	algorithm for trees and a polynomial one for %a special case.
	special cases.
	%stars.
	 An overview of these results is presented in \autoref{tab:complexity}.
	 
%The remainder of the paper is structured as follows.
\medskip
\noindent
\textbf{Outline.}
 %In \autoref{sec:literature} we discuss relevant concepts from the literature and
 In \autoref{sec:problemdef} we formally introduce the line planning on all lines problem.
 %as well as the corresponding notation.
 We present NP-hard cases in \autoref{sec:nphard} and  discuss the hardness of approximation in \autoref{sec:approximation}. \autoref{sec:poly} contains %polynomial cases 
 a polynomial algorithm for stars and in \autoref{sec:trees} we develop a pseudo-polynomial solution approach for trees as well as a polynomial version for a special case.
 %We close the paper in \autoref{sec:outlook}.
 
 \begin{table}[h]
	\begin{tabular}{p{0.14\textwidth}p{0.43\textwidth}p{0.33\textwidth}} 
		\toprule
		graph class & \makecell[l]{no frequency-independent costs\\ ($\dfix=0$)} & \makecell[l]{with frequency-independent costs\\ ($\dfix>0$)}\\ 
		\midrule 
		stars & polynomial (\autoref{thm:stars_poly}) & NP-hard (\autoref{thm:dhard_star}) \\
		\addlinespace
		paths & polynomial for $\fmax \equiv \infty$ (see \cite{MasterarbeitPhiline})
		 & NP-hard  (\autoref{thm:dhard_path})\\
		 \addlinespace
		trees & \makecell[l]{pseudo-polynomial (\autoref{thm:trees_linear})\\
		polynomial for $\fmin = \fmax$ (\autoref{thm:trees_fmin=fmax})}  & NP-hard  (Theorems \ref{thm:dhard_path} and \ref{thm:dhard_star})\\ 
		\addlinespace
		%treewidth $=2$ &pseudopolynomial (\autoref{sec:treewidth_dp}) & \\
		%treewidth $>2$ & algorithm from $=2$ can possibly be extended& \\
		planar graphs & \makecell[l]{NP-hard, even for $\{0,1\}$  input\\ (\autoref{cor:planar})} & \makecell[l]{NP-hard, even for $\{0,1\}$ input\\ (\autoref{cor:planar})}\\
		\bottomrule
	\end{tabular}
	\caption{Complexity of \LPAL{} for various graph classes.}\label{tab:complexity}
\end{table}   
	
\section{Preliminaries}
\label{sec:problemdef}

%In this section, we introduce the necessary notation and formally define the line planning on all lines problem.

\noindent
\textbf{Graph theory.}
	All graphs in this paper are finite, simple and non-empty. 
	Whenever we consider a graph $G=(V,E)$, we use $n:=|V|$ to denote its number of vertices.
	 We measure the complexity of graph problems dependent on $n$.
	The \emph{degree} $\deg(v)$ of a vertex~$v$ is the number of its neighbors.
	A graph $(V,E)$ with  $V = \{v_1, \dots, v_m\}$ and $E = \{\{v_1,v_2\}, \dots, \{v_{m-1},v_m\}\}$ where all the $v_i$ are distinct is a simple $v_1$-$v_m$-\emph{path} (or just \emph{path}). Paths can be specified as a sequence of vertices or as a sequence of edges.
	A complete bipartite graph of the form $K_{1,k}$ is a \emph{star}.

	\medskip
	\noindent
	\textbf{Line planning.}
	A \emph{public transport network (PTN)} is a graph $G=(V,E)$ whose vertices represent stations while its edges represent direct connections between the stations, e.g., streets or tracks.
	A \emph{line planning instance} is a tuple $(G, \dfix, \cfix, c,  \fmin, \fmax)$, where 
	\begin{itemize}
		\item $G=(V,E)$ is a PTN,
		\item $\dfix \in \mathbb{R}_{\geq 0}$ represents \emph{frequency-independent fixed costs},
		\item $\cfix \in \mathbb{R}_{\geq 0}$ represents \emph{frequency-dependent fixed costs},
		\item $c\colon E \to \mathbb{R}_{\geq 0}$, $e \mapsto c_e$ is a map representing the \emph{edge-dependent costs}, and
		\item $\fmin$ and $\fmax$ are \emph{integer frequency restrictions} on $E$, $e \mapsto \femin$ (respectively $e \mapsto \femax$) such that $\femin \leq \femax$ for all edges $e \in E$.
	\end{itemize}
	A \emph{line} $\ell$ is a simple path in $G$ and a \emph{line concept} $(\cL,f)$ is a set of lines $\cL$ with a \emph{frequency vector} $f=(f_{\ell})_{\ell \in \cL}\in \mathbb{N}^{|\cL|}$, i.e.\  $f_{\ell}$ is the \emph{frequency} of line $\ell$. %\philine{We also write this as multiset $\cL^f$ where for $\ell \in \cL$ the multiplicity is given by $f_{\ell}$.}
	At each edge $e \in E$, the lines sum up to a total frequency \[\ftotal_e = \sum_{\ell \in \cL \colon e \in E(\ell)} f_{\ell},\]
	where $E(\ell)$ denotes the edge set of $\ell$.
	A line concept is \emph{feasible} if for each edge $e \in E$ the frequency restrictions are satisfied, i.e.\ $\femin \leq \ftotal_e \leq \femax$.
	The set of feasible line concepts is $\feasible(G,\fmin,\fmax)$ which we may abbreviate by writing $\feasible(G)$.
	
	We use frequency-dependent \emph{line costs} $\cost_{\ell} = \cfix + \sum_{e \in E(\ell)} c_e$ which consist of fixed costs $\cfix$ and edge-dependent costs $c_e$, $e \in E$. Additionally, we use frequency-independent costs $\dfix$ per line. We define the \emph{costs of a line concept} $(\cL,f)$ as 
	\[\cost((\cL,f)) = \dfix \cdot |\cL| + \sum_{\ell \in \cL} \cost_{\ell} \cdot  f_{\ell}.\]
	
	With this notation, we can formally define the line planning on all lines problem. %considered in this paper.
	
	\begin{definition}
		Given a line planning instance, the \emph{line planning on all lines problem} \LPAL{} is to find a feasible line concept with minimal costs. 
	\end{definition}

\section{NP-hard cases}\label{sec:nphard}
For general graphs and general cost structures, the problem of finding a cost-optimal line concept is known to be NP-hard, even if
\begin{itemize}
	\item $\dfix=1$, $\cfix=0$, $c \equiv 0$,  $\femin \in \{0,1\}$ for all $e \in E$, $\fmax\equiv \infty$ or $\fmax\equiv1$ \cite{GHS16} or
	\item $\dfix=0$, $\femin \in \{0,1\}$ for all $e \in E$, $\fmax\equiv \infty$ or $\fmax \equiv1$ \cite{MasterarbeitPhiline}.
\end{itemize}

We can strengthen theses results and show that \LPAL{} is NP-hard even for subcubic planar graphs.

\begin{corollary}\label{cor:planar}
	The problem \LPAL{} is NP-hard, even if $G$ is a planar graph with maximum vertex degree at most~3 and
	\begin{itemize}
		\item $\dfix=1$, $\cfix=0$, $c \equiv 0$,  $\femin \in \{0,1\}$ for all $ e \in E$, $\fmax\equiv \infty$ or $\fmax\equiv1$ or
		\item $\dfix=0$, $\femin \in \{0,1\}$ for all $ e \in E$, $\fmax\equiv \infty$ or $\fmax \equiv1$. 
	\end{itemize}
\end{corollary}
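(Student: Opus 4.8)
The plan is to obtain \autoref{cor:planar} as a consequence of the known NP-hardness results cited just above it, by a reduction that preserves the relevant cost structure while forcing the underlying graph to be planar and subcubic. Both cited hardness results ultimately reduce from a covering-type problem (the frequency-independent case in \cite{GHS16} is essentially a disguised set-cover/path-cover, and the $\dfix = 0$ case in \cite{MasterarbeitPhiline} forces each edge with $\femin = 1$ to be covered by some line); so the natural approach is to start from a restricted version of such a covering problem that is already known to be NP-hard on subcubic planar structures, or alternatively to take a general hard instance of \LPAL{} and locally replace high-degree vertices by planar gadgets of bounded degree.

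First I would make explicit which NP-hard problem the cited results reduce from — the most likely candidate being a variant where one must cover all ``required'' edges ($\femin_e = 1$) by simple paths, minimizing either the number of paths (when $\dfix = 1$, $\cfix = c \equiv 0$) or the total covered length/frequency (when $\dfix = 0$). Then I would invoke a planar, bounded-degree version of that covering problem. The standard move here is a vertex-splitting gadget: replace a vertex $v$ of degree $d > 3$ by a small tree or cycle of new degree-$3$ vertices, distributing the $d$ original edges among the leaves, and give the new internal edges cost $0$ (for $c$) and frequency bounds $\femin = 0$, $\fmax = \infty$ (or $\fmax = 1$, matching whichever case we are in) so that they can be freely used or not used by lines. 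The key claim is that any line (simple path) through the original $v$ corresponds to a simple path through the gadget and vice versa, so feasible line concepts and their costs are preserved up to the additive cost of the gadget edges, which is $0$. Planarity is maintained because the gadget is planar and is inserted in place of a single vertex, and the construction should be arranged to start from an already-planar hard instance (e.g.\ restricting the source problem to planar inputs, which is typically still NP-hard for covering problems) so that only the degree bound needs to be fixed.

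The two bullet cases ($\dfix = 1, \cfix = 0, c \equiv 0$ versus $\dfix = 0$) should be handled by essentially the same gadget, since in both the gadget edges contribute zero cost and impose no binding frequency constraint; I would just check separately that in the $\dfix = 1$ case the number of lines is unchanged by the splitting (a path through the gadget is still one line), and in the $\dfix = 0$ case the total frequency-weighted length is unchanged because gadget edges have $c_e = 0$. The $\fmax \equiv \infty$ and $\fmax \equiv 1$ sub-cases likewise go through in parallel: set the gadget edges' $\fmax$ to match the ambient value, and verify that the correspondence between lines of the original instance and lines of the transformed instance respects the per-edge upper bound on the original edges (which are untouched by the gadget).

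The main obstacle I anticipate is the bookkeeping around simple paths through the gadget: when several lines of the original instance all pass through the same high-degree vertex using different pairs of incident edges, the corresponding lines in the transformed instance must simultaneously route through the gadget without two of them being forced to share a gadget edge in a way that violates $\fmax = 1$ (in that sub-case) or that cannot be realized by vertex-disjoint-enough paths. Choosing the gadget to be a path/caterpillar that linearly orders the $d$ attachment points, rather than a star-like tree, and arguing that any set of ``transition pairs'' at $v$ can be realized as edge-disjoint subpaths of that caterpillar (or, when $\fmax = 1$ is required, that a matching-like structure suffices) is the delicate combinatorial point; getting the gadget right so that it works uniformly for all four parameter combinations in the two bullets is where the real content of the proof lies, with everything else being a routine verification that costs and feasibility are preserved.
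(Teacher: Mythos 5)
Your proposal diverges from the paper's argument, and the route you actually develop has gaps that you flag but do not close. The paper's proof is a two-line observation: the reductions of \cite{GHS16} and \cite{MasterarbeitPhiline} that establish the two bullet cases both start from the \emph{directed Hamiltonian path problem} (not a set-cover/path-cover problem, as you guess), and Ples\'{n}ik \cite{plesn1979np} already showed that Hamiltonian path is NP-hard on planar digraphs with degree bound two in which every vertex has in- or out-degree one. Feeding these restricted digraphs into the unchanged reductions yields line planning instances that are planar with maximum degree three (the $v_{in}$/$v_{out}$ split of a vertex amounts to subdividing an edge), so no new gadget is needed. Your first sentence does mention the right strategy in passing (``start from a restricted version \dots already known to be NP-hard on subcubic planar structures''), but you never identify the source problem or the result that makes this work, and instead you invest the bulk of the proposal in a vertex-splitting gadget.

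That gadget approach has two genuine problems. First, the ``crossing pairs'' issue you yourself name is not a bookkeeping detail but an obstruction: with $\fmax\equiv 1$ on the gadget edges, an arbitrary set of transition pairs at a degree-$d$ vertex cannot in general be realized by edge-disjoint paths inside a planar bounded-degree gadget --- two pairs that interleave along a caterpillar must share an internal edge, and planarity prevents routing around this for general pairings; choosing a caterpillar does not fix it. Second, and independent of $\fmax$, a simple path in the transformed graph may enter and leave the same gadget several times through distinct gadget vertices, which corresponds to a \emph{non-simple} walk through $v$ in the original graph. Such lines can make the transformed instance strictly cheaper than the original, which breaks the backward direction of the reduction (a cheap solution to the transformed instance need not map back to a feasible line concept of the original). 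You do not address this at all. Either problem alone is enough to sink the gadget construction as proposed; the paper's proof avoids both by restricting the source problem rather than post-processing the constructed instance.
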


\begin{proof}
	Ples\'{n}ik \cite{plesn1979np} shows that the Hamiltonian path problem is NP-hard even for planar digraphs with degree bound two, especially for digraphs where each vertex has either in- or out-degree one. If the reductions of \cite{GHS16,MasterarbeitPhiline} are applied to these graphs, the constructed line planning instance consists of a planar graph with vertex degree at most three. (Splitting a vertex $v$ into $v_{in}$, $v_{out}$ results in subdiving an edge into two edges.)    
\end{proof}

In the following, we show that when frequency-independent costs $\dfix$ are considered, \LPAL{} remains NP-hard even for paths and stars. 
We formulate problem reductions that utilize $\fmax$, but the following lemma can be applied to prove that hardness still holds if $\fmax\equiv\infty$, i.e.\ if no maximum constraint is put on the frequencies.

\begin{lemma}[Lifting $\fmax$]\label{lem:fmax_infty}
	Let $I=((V,E), \dfix,\cfix, c,\fmin,\fmax)$ be an instance to \LPAL{} where $\cfix=0$, $c\equiv0$ and $\fmin=\fmax$. Let $K \in \mathbb{N}$.
	Define $I':=((V,E), \dfix, \cfix, c', \fmin, \infty)$ with $c' :\equiv K+1$ and $K':= K + (K+1)\sum_{e \in E} \fmin_e$.
	
	Then $I$ has a feasible line concept with cost at most $K$ if and only if $I'$ has a feasible line concept with cost at most $K'$. Both 	
	$I'$ and $K'$ can be computed in polynomial time.
\end{lemma}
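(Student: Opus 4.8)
The plan is to establish a cost-preserving correspondence between feasible line concepts of $I$ and of $I'$. The key observation is that $I$ and $I'$ have the same PTN and the same lower bounds $\fmin$, but differ in that $I'$ removes the upper bound ($\fmax\equiv\infty$) and charges a uniform edge cost $c'\equiv K+1$ (with $\cfix=0$ kept). Since in $I$ we have $\fmin=\fmax$, every feasible line concept for $I$ must realize the total frequency $\ftotal_e=\fmin_e$ on every edge $e$; the same line concept is automatically feasible for $I'$ because the only constraint left there is $\ftotal_e\ge\fmin_e$. Conversely, I will argue that any feasible line concept for $I'$ whose cost is at most $K'$ must in fact use $\ftotal_e=\fmin_e$ on every edge: overshooting the lower bound on even a single edge would add an extra $K+1$ to the cost (because each edge traversal costs $c'_e=K+1$, counted with multiplicity $f_\ell$), already pushing the cost above $(K+1)\sum_{e}\fmin_e+1 > K'$ once one accounts for the base cost $(K+1)\sum_e\fmin_e$ that is unavoidable. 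Hence such a concept is feasible for $I$ as well.

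The second step is the bookkeeping that translates costs. For a line concept $(\cL,f)$ that satisfies $\ftotal_e=\fmin_e$ for all $e$, its cost in $I'$ is
\[
\cost_{I'}((\cL,f)) = \dfix\cdot|\cL| + (K+1)\sum_{e\in E}\ftotal_e = \dfix\cdot|\cL| + (K+1)\sum_{e\in E}\fmin_e,
\]
using $\cfix=0$ and $\sum_{\ell}\bigl(\sum_{e\in E(\ell)}c'_e\bigr)f_\ell = \sum_e c'_e\ftotal_e$. The cost in $I$ of the same concept is $\cost_I((\cL,f)) = \dfix\cdot|\cL|$, again because $\cfix=0$ and $c\equiv0$. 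Therefore $\cost_{I'}((\cL,f)) = \cost_I((\cL,f)) + (K+1)\sum_{e\in E}\fmin_e$, and the threshold transforms exactly as $K \mapsto K' = K + (K+1)\sum_{e\in E}\fmin_e$. Combining the two directions: $I$ has a feasible concept of cost $\le K$ iff (same concept) $I'$ has a feasible concept of cost $\le K'$ realizing $\ftotal=\fmin$, iff $I'$ has any feasible concept of cost $\le K'$ (by the forcing argument of step one). Finally, $c'\equiv K+1$, $\fmax\equiv\infty$, and $K' = K+(K+1)\sum_{e}\fmin_e$ are clearly computable in time polynomial in the size of $I$ and the encoding of $K$.

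The main obstacle is the forcing argument: I must be careful that a feasible concept for $I'$ cannot "cheat" by having many lines with small frequency versus few lines with larger frequency — but since the edge-cost term depends only on the total frequencies $\ftotal_e$, not on how they are split among lines, the argument is robust. The one genuine subtlety is ensuring that the inequality is strict enough: if $(\cL,f)$ is feasible for $I'$ and overshoots on some edge, then $\sum_e c'_e\ftotal_e \ge (K+1)\bigl(\sum_e\fmin_e + 1\bigr) = (K+1)\sum_e\fmin_e + (K+1) > K'$ since $K+1 > K$; combined with $\dfix\cdot|\cL|\ge 0$ this already exceeds $K'$, so no such concept meets the threshold. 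This closes the equivalence.
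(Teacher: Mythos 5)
Your proposal is correct and follows essentially the same route as the paper: transfer the same line concept in both directions, use $\fmin=\fmax$ (resp.\ the uniform edge cost $c'\equiv K+1$) to pin the frequency-dependent cost to $(K+1)\sum_{e\in E}\fmin_e$, and rule out any overshoot in $I'$ because a single extra unit of total frequency costs $K+1>K$, exceeding $K'$. The only blemish is the intermediate claim in your plan that the cost is pushed above ``$(K+1)\sum_e\fmin_e+1>K'$'' (which would need $K<1$), but your final computation $(K+1)\sum_e\fmin_e+(K+1)>K'$ is the correct one and matches the paper's contradiction argument.
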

\begin{proof}
We show that we can transfer a solution $(\cL,f)$ from one instance to the other, such that it is still feasible and within the cost bound. We add a superscript to $\cost$, to distinguish for which instance we view the costs.

$I \rightarrow I'$: Clearly, $(\cL,f)$ remains feasible for $I'$. Since $c\equiv0$ and $\cfix=0$, we have $\cost^{I}((\cL,f))$ = $\dfix \cdot |\cL|$, which is less or equal to $K$.
Since $\fmin=\fmax$, the frequency-dependent line costs of $I'$ are predetermined: \begin{align*}
\sum_{\ell \in \cL} f_{\ell} \cdot \cost_{\ell}^{I'} 
&= \sum_{\ell \in \cL} f_{\ell} \cdot \left( \cfix + \sum_{e \in E(\ell)} c'_e \right) \\
&= \sum_{\ell \in \cL} f_{\ell} \sum_{e \in E(\ell)} c'_e 
= \sum_{e \in E} c'_e \sum_{\substack{\ell \in \cL: \\ e \in E(\ell)}} f_{\ell} 
= \sum_{e \in E} (K+1) \fmin_e
\end{align*} 
Then $\cost^{I'}((\cL,f)) = \dfix \cdot |\cL| + \sum_{e \in E} (K+1) \fmin_e \leq K + \sum_{e \in E} (K+1) \fmin_e = K'$.

$I' \rightarrow I$: Towards a contradiction, assume $\fmin_e+1 \leq \sum_{\ell \in \cL \colon e \in E(\ell)} f_{\ell}$ for some $e \in E$. Then we can derive in a similar fashion: 
\begin{align*}
\cost^{I'}((\cL,f)) &= \dfix \cdot |\cL| + \sum_{e \in E} (K+1) \sum_{\substack{\ell \in \cL: \\ e \in E(\ell)}} f_{\ell}
\geq \dfix  \cdot |\cL| + (K+1) + \sum_{e \in E} (K+1) \fmin_e \\
& > K + (K+1) \sum_{e \in E} \fmin_e = K'
\end{align*} 
\begin{sloppypar}
This contradicts our assumption $\cost^{I'}((\cL,f)) \leq K'$. Therefore we have \linebreak $\fmin_e = \sum_{\ell \in \cL \colon e \in E(\ell)} f_{\ell}$ for all $e \in E$, implying that $(\cL,f)$ is a feasible line concept for $I$.
Subtracting the now fixed frequency-dependent line costs, we see that $\cost^{I}((\cL,f)) = \dfix \cdot |\cL| \leq K$.
\end{sloppypar}
\end{proof}

First, we show that \LPAL{} is NP-hard on paths.

\begin{theorem}\label{thm:dhard_path}
	The problem \LPAL{} is NP-hard, even if $G$ is a path and $\fmin= \fmax$ or $\fmax\equiv\infty$.
\end{theorem}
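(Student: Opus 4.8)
The plan is to reduce from \textsc{Subset Sum} and then to invoke \autoref{lem:fmax_infty} to obtain the $\fmax\equiv\infty$ variant for free, so it suffices to establish hardness for instances with $\fmin=\fmax$, $\cfix=0$ and $c\equiv0$. For such instances the cost of a feasible line concept $(\cL,f)$ is simply $\dfix\cdot|\cL|$, so fixing $\dfix=1$ the problem becomes: given a prescribed frequency profile $r=\fmin=\fmax$ on the edges of a path, determine the minimum number of \emph{distinct} sub-paths needed to write $r$ as a non-negative integer combination of sub-path indicator vectors.

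Given positive integers $t_1,\dots,t_n$ and a target $t$, where we may assume $0<t<\sigma:=\sum_i t_i$ (the other cases being trivial), I would build the path $P$ on vertices $v_0,v_1,\dots,v_{n+1}$ with edges $e_i=\{v_{i-1},v_i\}$ and set $r_{e_1}:=t$, $r_{e_2}:=\sigma$ and $r_{e_i}:=\sigma-\sum_{j=1}^{i-2}t_j$ for $i\ge 3$ (so $r_{e_{n+1}}=t_n\ge 1$ and $r_{e_i}\ge 0$ throughout), with $\fmin=\fmax=r$. The point of this choice is the telescoping reading: the sub-path from $v_{a-1}$ to $v_b$, used with frequency $m$, raises $r_{e_a}$ by $m$ and lowers $r_{e_{b+1}}$ by $m$ (indices outside $\{1,\dots,n+1\}$ being ignored). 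Since the consecutive differences of $r$ (with a trailing $0$ appended) are $t,\ \sigma-t,\ -t_1,\ -t_2,\ \dots,\ -t_n$, a feasible line concept is exactly a way of routing $t$ units from $v_0$ and $\sigma-t$ units from $v_1$ into sinks of sizes $t_1,\dots,t_n$ located at $v_2,\dots,v_{n+1}$; and $|\cL|$ equals the number of distinct (start, end)-pairs used.

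I would then show that the optimum is $n$ if the \textsc{Subset Sum} instance is a yes-instance and $n+1$ otherwise. Every feasible concept uses at least $n$ lines, since for each $k\in\{2,\dots,n+1\}$ at least one line has $e_k$ as its last edge and these are pairwise distinct; and $n+1$ lines always suffice (fill the sinks greedily from $v_0$, splitting at most one of them, and fill the rest from $v_1$). If a subset $S$ of the items sums to $t$, then serving sink $j$ entirely from $v_0$ when $j\in S$ and entirely from $v_1$ otherwise uses exactly $n$ lines, which is then optimal. For the converse, assume a feasible concept uses only $n$ lines; then exactly one line $\ell^{(k)}$ has $e_k$ as last edge for each $k\in\{2,\dots,n+1\}$, and these are all the lines. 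Form the auxiliary graph $H$ on $\{v_0,\dots,v_{n+1}\}$ with edge set $\{\{u_k,v_k\}:k=2,\dots,n+1\}$, where $u_k$ is the start vertex of $\ell^{(k)}$; since $u_k$ has strictly smaller index than $v_k$, each $v_k$ ($k\ge 2$) is the unique ``owner'' of one edge, and a maximum-index argument shows $H$ is a simple forest (in a cycle the maximum-index vertex would own two incident edges). Having $n$ edges on $n+2$ vertices, $H$ has exactly two components, and repeatedly traversing owner-edges (each leading to a strictly smaller index) from any vertex reaches $v_0$ or $v_1$, so one component $C^0$ contains $v_0$ and the other contains $v_1$; together they partition the vertices. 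Reading the line concept as a flow on the out-arborescences rooted at $v_0$ and $v_1$, conservation forces the frequency of $\ell^{(k)}$ to equal the total sink demand in the subtree below $v_k$, hence the flow leaving $v_0$ to equal the total sink demand of $C^0$; since the lines through $e_1$ all start at $v_0$, this flow equals $r_{e_1}=t$, so the items corresponding to the sinks in $C^0$ form a subset summing to $t$. Finally, \autoref{lem:fmax_infty} applies (its hypotheses $\cfix=0$, $c\equiv0$, $\fmin=\fmax$ hold) and transfers hardness to $\fmax\equiv\infty$.

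The step I expect to be the main obstacle is the converse direction: showing that an $n$-line concept is forced into the rigid ``two out-trees'' shape, where the forest property of $H$ together with flow conservation on its two components does the work. The other ingredients — the bounds $|\cL|\ge n$ and $|\cL|\le n+1$, and the explicit $n$-line concept built from a subset summing to $t$ — are routine. Note finally that the item values enter the reduction only through the frequency bounds, so this is a weak NP-hardness reduction, consistent with the pseudo-polynomial algorithm for trees of \autoref{thm:trees_linear}.
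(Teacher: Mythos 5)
Your reduction is correct, but it takes a genuinely different route from the paper. The paper reduces from 3-Partition: it builds a path with $p$ ``source'' vertices (each supplying $h$) followed by $3p$ ``sink'' vertices (each demanding $x_i$), forces exactly $3p$ lines each with frequency $x_i$, and reads off the partition from the left endpoints; the case of lines starting inside the sink region is handled by an explicit elongation/re-routing argument. You instead reduce from Subset Sum with only two sources ($v_0$ supplying $t$ and $v_1$ supplying $\sigma-t$), and your converse direction replaces the paper's re-routing step by the forest-of-owner-edges argument: with exactly $n$ lines the auxiliary graph $H$ is a forest with precisely two components, each an out-arborescence rooted at $v_0$ or $v_1$, and flow conservation then forces the component of $v_0$ to carry total demand $t$. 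That argument is sound (the lower bound $|\cL|\ge n$ via distinct right ends, the acyclicity via the maximum-index vertex owning two edges, and the component count $n+2-n=2$ all check out), and \autoref{lem:fmax_infty} applies as you say. The one thing your approach gives up, which you correctly flag yourself, is \emph{strong} NP-hardness: the paper's 3-Partition reduction keeps all frequency bounds polynomially bounded, which is what justifies the remark at the end of \autoref{sec:nphard} that the hardness results are strong (and hence that no pseudo-polynomial algorithm exists for $\dfix>0$ unless $P=NP$). Your Subset-Sum reduction encodes the item values in the frequency bounds and therefore only establishes weak NP-hardness; it proves the theorem as literally stated, but would not support that follow-up remark, so if you wanted the stronger conclusion you would need to upgrade the source problem (e.g.\ to 3-Partition, essentially recovering the paper's construction with $p$ sources instead of two).
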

\begin{proof}
We show a reduction of the 3-Partition problem \cite{DBLP:books/fm/GareyJ79} to the decision version of \LPAL{}, first for the case $\fmin = \fmax$. 
Let a multiset of positive integers $S=\{x_1,\dots,x_{3p}\}$ be given, in an arbitrary order. 
The idea of our construction is to have a path with one interval of monotonically increasing frequency constraints, and another interval with monotonically decreasing frequency constraints. The first interval represents partitions $S_1, \ldots, S_p$ while the second interval represents the elements of $S$.
By choosing the frequency restrictions, we force the multiset of line frequencies to be exactly $S$.
Then we can construct lines to have one end in the first interval, and the other end in the second interval, representing to which set $S_k$ an element $x_i \in S$  is assigned.
In the first interval, lines can overlap in different ways, each representing a different way to partition $S$.

Define $h:=\sum {S} / p$. %We may assume $m$ to be divisible by 3, and $h$ to be an integer, as otherwise the instance would be trivial.
We may assume $h$ to be an integer.
Additionally we can assume that every subset of $S$ which sums to $h$, contains exactly 3 elements; this does not weaken the 3-Partition problem.

Now define a sequence of integers, used for constructing the frequency  restrictions: 
\[a_i := \begin{cases}
h &\text{if~}i \leq 0\\
-x_{i} &\text{if~}i > 0
\end{cases} \quad\text{ for } i \in [1-p, 3p]. \text{ (Note that indices may be negative.)} \]
We construct our instance $I=(G, \dfix,\cfix, c,\fmin,\fmax)$ with decision parameter $K$ as follows:
\begin{multicols}{4}
	\begin{itemize}
		\item $\dfix := 1$
		\item $\cfix := 0$
		\item $c :\equiv 0$
		\item $K := 3p$
	\end{itemize}
\end{multicols}
The graph $G$ is a path on $4p$ vertices, which we call $v_{1-p}, \dots, v_{3p}$. The edges are $e_i := \{v_i, v_{i+1}\}$ for $i \in [1-p, 3p-1]$.
For all $i \in [1-p, 3p-1]$, we set $\fmin_{e_i} := \fmax_{e_i} := \sum_{j=1-p}^{i} a_j$. The construction is illustrated in \autoref{fig:dhard_path:example}.

Consider a feasible solution $(\cL,f)$ for $I$ with $\cost((\cL,f)) \leq K$.
From $\dfix=1$ follows that $|\cL| \leq 3p$.
Since $G$ is a path, we can say that every line of $(\cL,f)$ has a left and a right end. We first argue the case where the left end of each line $(v_i, \ldots, v_j)$ is in the first interval, i.e., $v_i$ satisfies $i \in [1-p,0]$, and the right end is in the second interval, i.e., $v_j$ satisfies $j \in [1,3p]$.

For any $i\in [0,3p-2]$ we have $\ftotal_{e_{i}} > \ftotal_{e_{i+1}}$, implying that at least one line must have a right end at $v_{i+1}$. Also some line must end at $v_{3p}$, since $\ftotal_{e_{3p-1}} = ph - \sum_{j=1}^{3p-1} x_i = x_{3p} > 0$. 
Hence $\cL$ consists of exactly $3p$ lines, each having the right end at a different $v_i$ for $i\in [1, 3p]$.

Define $\ell_i$ to be the unique line ending at $v_i$. To make up the frequency difference $x_i$ in the graph $G$, we must have $f_{\ell_i} = x_i$. 
%Hence for every $x\in S$ we have a line with frequency $x$. 
Now, for any $j\in [1-p,0]$, consider the subset $\hat L_j$ of lines which have their left end at $v_j$. Their frequencies must sum up to $\ftotal_{e_{j}} - \ftotal_{e_{j-1}} = a_j = h$.
%Since $\ftotal_{e_{0}} = \sum{S} = \sum_{\ell \in \cL} f_{\ell}$, all lines cover $e_{0}$ and hence have their left ends at some $v_j$ with $j\in [1-p,0]$.
Since all lines  have their left ends at some $v_j$ with $j\in [1-p,0]$, it follows that the sets $\hat L_{1-p}, \dots, \hat L_0$ partition $\cL$, and correspond to a partition of $S$, where each subset has sum $h$. This solves the 3-Partition problem.

If there is a line whose left and right end are in the second interval, it is no longer guaranteed that $f_{\ell_i} = x_i$ for the line $\ell_i$ with right end at $v_i$. Instead, $f_{\ell_i}$ is increased by the total frequency of lines whose left ends are at $v_i$. Now, we can elongate all lines with left end at $v_i$ to the left end of $\ell_i$ and reduce the frequency of $\ell_i$ to $x_i$ without introducing new lines or changing the total frequency of any edge. As there are no lines whose left end is $v_{3p}$ and the right end of $\ell_1$ has to be in the first interval, this allow us to construct a solution in the desired form in linear time. 

For the other direction, consider a given solution to the 3-Partition problem: $S_1 \cup \dots \cup S_{p} = S$ with $\sum{S_k} = h$ for all $k$. We construct a feasible line concept as follows: For every $i\in [1,3p]$, create a line $\ell_i$ with frequency $x_i$, having its right end at $v_i$. If $x_i \in S_k$ then $\ell_i$ has its left end at $v_{1-k}$. It is easy to check that these lines sum up exactly to the frequency profile of $G$, and the cost $K$ is not exceeded.

To show hardness for the case $\fmax\equiv\infty$, we can apply \autoref{lem:fmax_infty}.
\end{proof}

\begin{figure}
\begin{center}
\begin{tikzpicture}[xscale=1.5,yscale=1.2]
\tikzset{
	knoten/.style={draw,rounded corners,minimum size=5pt}
}
\node[knoten] (va) at (-2,0) {$v_{-1}$};
\node[knoten] (vb) at (-1,0) {$v_{0}$};
\node[knoten] (v1) at (0,0) {$v_{1}$};
\node[knoten] (v2) at (1,0) {$v_{2}$};
\node[knoten] (v3) at (2,0) {$v_{3}$};
\node[knoten] (v4) at (3,0) {$v_{4}$};
\node[knoten] (v5) at (4,0) {$v_{5}$};
\node[knoten] (v6) at (5,0) {$v_{6}$};

\draw (va) -- (vb) node[midway,above] {$10$};
\draw (vb) -- (v1) node[midway,above] {$20$};
\draw (v1) -- (v2) node[midway,above] {$19$};
\draw (v2) -- (v3) node[midway,above] {$17$};
\draw (v3) -- (v4) node[midway,above] {$15$};
\draw (v4) -- (v5) node[midway,above] {$11$};
\draw (v5) -- (v6) node[midway,above] {$6$};

\draw [|-|,thick,blue](-2,1.4)  -- (0,1.4) node[right]{1};
\draw [|-|,thick,blue](-1,1.2)  -- (1,1.2) node[right]{2};
\draw [|-|,thick,blue](-1,1.0)  -- (2,1.0) node[right]{2};
\draw [|-|,thick,blue](-2,0.8)  -- (3,0.8) node[right]{4};
\draw [|-|,thick,blue](-2,0.6)  -- (4,0.6) node[right]{5};
\draw [|-|,thick,blue](-1,0.4)  -- (5,0.4) node[right]{6};
\end{tikzpicture}
\end{center}
\caption{Example for the construction from \autoref{thm:dhard_path}, along with feasible line concept. Here $S=\{1,2,2,4,5,6\}$.}\label{fig:dhard_path:example}
\end{figure}
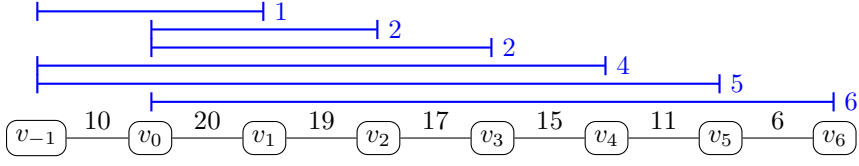

Additionally, \LPAL{} is NP-hard on stars.

\begin{theorem}\label{thm:dhard_star}
	The problem \LPAL{} is NP-hard, even if $G$ is a star and $\min = \fmax$ or $\fmax\equiv\infty$.
\end{theorem}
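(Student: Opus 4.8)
The plan is to mimic the proof of \autoref{thm:dhard_path} and reduce from 3-Partition (or a related strongly NP-hard number problem), again using the ``cheapest'' cost structure $\dfix=1$, $\cfix=0$, $c\equiv 0$, so that the cost of a line concept equals the number $|\cL|$ of its lines; this is exactly what will let me reuse \autoref{lem:fmax_infty} to pass from $\fmin=\fmax$ to $\fmax\equiv\infty$ at the end. Given a 3-Partition instance $S=\{x_1,\dots,x_{3p}\}$ with $\sum S = ph$ (and, as in \autoref{thm:dhard_path}, $h/4<x_i<h/2$, so every subset of $S$ summing to $h$ has exactly three elements), I would let $G$ be a star whose leaves are $p$ ``bucket'' leaves, each with $\fmin=\fmax=h$, together with $3p$ ``item'' leaves, the one for $x_i$ having $\fmin=\fmax=x_i$, and set the decision threshold to $K:=3p$. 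In a star every line is either a single edge at the centre or a path through the centre joining two leaves, so a line concept is just a weighted (multi)graph on the leaves plus a few single-edge lines, and the quantity to minimise is the number of these. Given a valid partition $S_1\cup\dots\cup S_p$, putting for each $i$ one line through the centre from the item leaf of $x_i$ to the bucket leaf of the set $S_k\ni x_i$, with frequency $x_i$, realises all demands exactly and uses precisely $3p$ lines; this gives the ``if'' direction.

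For the converse I would argue that any feasible concept with at most $3p$ lines must have this matching shape. Counting leaf--line incidences, each of the $4p$ leaves lies on at least one line, a line through the centre meets two leaves and a single-edge line meets one, so with at most $3p$ lines very little slack is available; pushing this, one aims to force the concept to consist of exactly $3p$ centre-lines with every item leaf on precisely one of them. The frequency restriction at the item leaf of $x_i$ then fixes the frequency of its unique line to be $x_i$, and the restriction at a bucket leaf says the $x_i$ routed to it sum to exactly $h$; reading off the sets $S_k$ from which items are routed to bucket $k$ yields a 3-Partition solution.

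The step I expect to be the real obstacle is that this incidence count, as stated, is too weak, and the construction as described above is in fact incorrect: a centre-line joining two item leaves of equal value, or the two endpoints of \emph{any} ``balanced'' sub-configuration of leaves (e.g.\ three item leaves whose values realise a path through the centre, or --- since all bucket demands equal $h$ --- two bucket leaves), can save a line without corresponding to any partition. Closing this gap is the technical heart of the proof: one must choose the demands so that the only realizable tree-shaped sub-configurations of leaves are the intended bucket--triple ones. I would do this by first replacing the 3-Partition instance with an equivalent one whose item values are pairwise distinct and free of small additive coincidences --- scaling all values by a large factor and adding the distinct offsets $1,\dots,3p$ keeps the relevant subset sums inside narrow, controlled windows --- and then breaking the symmetry among the bucket leaves in the same controlled way (possibly with offset-absorbing auxiliary leaves), so that a triple of item leaves closes off exactly with the bucket leaf whose value matches, i.e.\ exactly when the underlying $x_i$ sum to $h$. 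Verifying that this transformation is polynomial, answer-preserving, and simultaneously destroys every unintended cheap configuration is where the bulk of the work lies. Once hardness is established for $\fmin=\fmax$ with $\cfix=0$ and $c\equiv 0$, \autoref{lem:fmax_infty} immediately upgrades it to the case $\fmax\equiv\infty$.
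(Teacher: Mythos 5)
Your high-level reading of the star case is right -- with $\dfix=1$, $\cfix=0$, $c\equiv 0$ and $\fmin=\fmax$, a line concept on a star is essentially a pairing structure on the leaf demands, and saving lines corresponds to finding disjoint subsets of the demands that split into two equal-sum halves. This is exactly the abstraction the paper isolates as the intermediate problem \PMPP{}. You also correctly identify the crux: on a path (\autoref{thm:dhard_path}) the left/right-interval geometry dictates which endpoints can be matched, but on a star nothing geometric prevents unintended equal-sum configurations, so the control must come entirely from the numbers. The gap is that your proposed numerical fix does not close this. Concretely: (i) the offsets $1,\dots,3p$ are not dissociated ($1+4=2+3$), so an additive coincidence $x_1+x_4=x_2+x_3$ in the original 3-Partition instance survives the perturbation and yields an unintended partitionable quadruple of item leaves; (ii) your $p$ bucket leaves all carry the same demand, so for $p\geq 4$ any four of them form an unintended $2$-vs-$2$ partitionable set, and breaking this symmetry runs into a chicken-and-egg problem, since after perturbation the demand of bucket $k$ would have to equal $M h$ plus the offset-sum of the three items assigned to it -- which depends on the solution you are trying to extract; the ``offset-absorbing auxiliary leaves'' would have to repair this without themselves creating new partitionable sets, and nothing in the sketch guarantees that. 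Moreover, your counting ($4p$ leaves, $p$ disjoint partitionable sets) is only tight if every partitionable subset has size at least $4$, so you must also rule out sizes $2$ and $3$ and then fully classify size-$4$ sets, where both problems (i) and (ii) live.

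The paper sidesteps all of this by not reducing from 3-Partition at all. It first proves \PMPP{} NP-hard (\autoref{lem:hardness_pmpp}) by adapting a reduction from Partial Latin Square Completion, encoding rows, columns and colours as three families of numbers built on a base $q=6p-2$; modular arithmetic then lets one verify exhaustively that the only partitionable subsets of size at most $3$ are the intended row--column--colour triples, and the tight count $|S|=3m$ with $m$ required sets forces every set to have size exactly $3$, so larger subsets never need to be analysed. The reduction from \PMPP{} to \LPAL{} on the star is then the clean combinatorial step you already have (via the auxiliary graph $H_\cL$ on the leaves and its forest structure). If you want to keep a direct 3-Partition reduction, you would essentially have to rebuild this kind of number-theoretic machinery from scratch -- truly dissociated offsets (e.g.\ powers of a large base, which also costs you strong NP-hardness), plus a symmetry-breaking gadget per bucket -- and verify the classification of all small partitionable subsets; as written, the converse direction of your reduction is false for $p\geq 4$.
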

To prove \autoref{thm:dhard_star}, we translate \LPAL{} with the assumptions of \autoref{thm:dhard_star} into a more abstract combinatorial problem:
\begin{definition}
	The \emph{Partition into many partitions problem} \PMPP{} is the following decision problem:
	
	Input: A set of positive integers $S$ and a number $K$. 
	
	Question: Can a subset $S'\subseteq S$ be partitioned into at least $K$ nonempty sets, such that each in turn is a yes-instance to the Partition problem?
\end{definition}
\begin{lemma}\label{lem:hardness_pmpp}
	\PMPP{} is NP-hard.
\end{lemma}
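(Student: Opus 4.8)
The plan is to reduce from the 3-Partition problem \cite{DBLP:books/fm/GareyJ79}. So let a multiset $\{x_1,\dots,x_{3p}\}$ of positive integers with $\sum_{i}x_i=ph$ be given; I use the standard normalisation to assume $h/4<x_i<h/2$ for every $i$ (so that every subset of the $x_i$ of sum $h$ has exactly three elements) and that the $x_i$ are pairwise distinct (both are standard assumptions for 3-Partition). Call a triple $T=\{a,b,c\}$ \emph{good} if $x_a+x_b+x_c=h$; there are at most $\binom{3p}{3}$ good triples and they can be listed in polynomial time. I would output the \PMPP{} instance $(S,K)$ with $K:=p$ and
\[
 S\;:=\;\{\hat x_i : 1\le i\le 3p\}\ \cup\ \{g_T : T\text{ good}\},\qquad
 \hat x_i:=R\,x_i+\beta^{\,i},\quad g_T:=R\,h+\textstyle\sum_{u\in T}\beta^{\,u},
\]
where $\beta$ is chosen strictly larger than the number of good triples and $R$ strictly larger than the sum of all tag-terms $\beta^{\,j}$ occurring; both have polynomial bit-length, so the reduction is polynomial and all elements of $S$ are distinct.

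Forward direction: if $T_1\uplus\dots\uplus T_p$ is a valid 3-partition, then the $p$ pairwise disjoint sets $P_k:=\{g_{T_k}\}\cup\{\hat x_u : u\in T_k\}$ are each a Partition yes-instance, witnessed by the split $\{g_{T_k}\}$ versus $\{\hat x_u : u\in T_k\}$ (both halves have sum $Rh+\sum_{u\in T_k}\beta^{\,u}$); hence $(S,K)$ is a yes-instance of \PMPP{}.

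Backward direction: suppose $S'\subseteq S$ is partitioned into $m\ge p$ Partition yes-instances $P_1,\dots,P_m$, and fix for each $P_j$ a split into equal-sum halves $A_j\uplus B_j$. Since $R$ dominates all tag-terms, $\sum A_j=\sum B_j$ decomposes into an ``$R$-part'' --- the $x$-weight of the $\hat x$'s in $A_j$ minus that of the $\hat x$'s in $B_j$ equals $h$ times the difference of the numbers of gadgets on the two sides --- and a ``tag-part'' which, by the choice of $\beta$, is a carry-free identity; reading off base-$\beta$ digits, the tag-part says that the sum of the incidence vectors of the elements of $A_j$ equals that for $B_j$, where $\hat x_i$ has incidence vector $e_i\in\mathbb Z^{3p}$ and $g_T$ has incidence vector $\chi_T:=\sum_{u\in T}e_u$. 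From $h/4<x_i<h/2$ and distinctness I then read off that no $P_j$ can contain exactly one or exactly two of the $\hat x$'s (the $x$-difference forced by the $R$-part would be a nonzero multiple of $h$ of absolute value $<h$), and that a $P_j$ containing exactly three $\hat x$'s must be a clean part $\{g_T\}\cup\{\hat x_u : u\in T\}$ for a good triple $T$ (extra gadgets would force, via the tag-part, two disjoint two-element families of good triples to have equal degree vectors, which equates two of the $x_i$). Hence every $P_j$ contains at least three $\hat x$'s or none at all; if no $P_j$ is ``gadget-only'', then $\sum_j\#\{\hat x\text{ in }P_j\}\le 3p$ forces $m=p$ with every $P_j$ clean, and the $p$ clean parts reassemble a valid 3-partition.

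The remaining point --- excluding gadget-only parts --- is the step I expect to be the main obstacle. A gadget-only part $\{g_{T_1},\dots,g_{T_{2r}}\}$ forces via the tag-part two disjoint $r$-element families of good triples with equal degree vectors, i.e.\ $\sum_{i\le r}\chi_{T_i}=\sum_{i>r}\chi_{T_i}$. For $r\le 2$ this is impossible (it makes two good triples share two elements, hence equates two $x_i$), so a gadget-only part needs at least six gadgets; but for $r=3$ such coincidences really do occur ($3\times 3$ semi-magic arrangements of the $x_i$), so one cannot rule them out outright. I would instead argue that a gadget-only part is too wasteful to lift $m$ above $p$ without a genuine 3-partition: the good triples it uses cover at most $3r$ of the $x_i$ and those elements already admit two distinct 3-packings, so by an exchange/peeling argument any family of $\ge p$ Partition yes-instances over $S$ can be converted into one that contains $p$ pairwise disjoint good triples among the $\hat x$'s. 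Making this exchange argument precise, and fixing $\beta$ and $R$ so that all the modular bookkeeping is valid, is the technical core.
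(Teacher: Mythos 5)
Your reduction is not complete: the soundness direction (a yes-instance of the constructed \PMPP{} instance implies a yes-instance of 3-Partition) is exactly the part you leave open, and it is not a routine detail. Because \PMPP{} only asks for a partition of \emph{some subset} of $S$, and because your $S$ contains one gadget $g_T$ for each of up to $\binom{3p}{3}$ good triples, there is an enormous amount of slack: a purported solution may use ``gadget-only'' parts, which consume no $\hat x_i$ at all and whose underlying triples may overlap arbitrarily in elements with the triples used by other parts. Such parts genuinely exist (for nine elements forming a shifted $3\times 3$ magic square, the three row-gadgets versus the three column-gadgets form a Partition yes-instance), so they cannot be excluded by the number-theoretic bookkeeping alone; you would have to prove the exchange/peeling claim that any family of $p$ disjoint Partition yes-instances over $S$ can be converted into $p$ clean parts, and that argument is missing. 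Moreover, one structural claim you do assert is not justified as stated: a part containing exactly three $\hat x$'s need not be clean. Your argument only excludes the smallest deviation (one extra gadget on each side, i.e.\ a $2{+}2$ trade of good triples, which indeed forces two equal $x_i$), but a part such as $\{\hat x_a,\hat x_b,\hat x_c, g_{T_1},g_{T_2}\}$ versus $\{g_{T_3},g_{T_4},g_{T_5}\}$, where $\{\{a,b,c\},T_1,T_2\}$ and $\{T_3,T_4,T_5\}$ are two parallel classes of a magic-square-type configuration, satisfies both the $R$-part and the tag-part without equating any elements. So the dichotomy ``every part is clean or gadget-only'' on which your counting rests is itself not established.

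The paper avoids all of this by reducing from Partial Latin Square Completion with a number system in which $|S|=3K$ exactly and no subset of size at most three, other than the intended row/column/cell triples, is a Partition yes-instance; then $K$ disjoint nonempty parts must each have exactly three elements by a pigeonhole count, and every part is forced to be an intended triple. That counting step is precisely what your construction lacks: your $S$ is much larger than $3K$ and the parts are not forced to be small, so without a substitute argument the reduction does not go through.
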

\begin{proof}\label{proof:hardness_pmpp}
	This proof is an adaptation of the reduction described in \cite{HULETT2008594}.
	We reduce Partial Latin Square Completion (PLSC) to \PMPP{}. For a definition of PLSC see \cite{HULETT2008594}.
	
	Consider a partial Latin square $L$ of dimension $p \times p$ with $m$ missing entries. 
	Define $q:=6p-2$. 
	We construct a \PMPP{} instance from the Latin square by defining $K:=m$ and putting the following numbers into the set $S$:
	\begin{itemize}
		\item If color $c$ does not occur in row $k$, put $x(k,c):= q(2k-1)-(2c-1)$ into $S$.
		\item If color $c$ does not occur in column $\ell$, put $y(\ell,c):= q^2(2\ell-1)+(2c-1)$ into $S$.
		\item If the cell in row $k$ and column $\ell$ is empty, put $z(k,\ell):= q^2(2\ell-1)+q(2k-1)$ into $S$.
	\end{itemize}
	\PMPP{} requires that $S$ only contains positive numbers. A quick check of the x-numbers shows that they are positive: $x(k,c) \geq q-(2c-1) \geq q-(2p-1) = 4p-1>0$. The y- and z-numbers are positive since $\ell$, $c$ and $k$ each are positive.
	
	We check that these numbers indeed form a set of size $3m$, i.e.\ they are pairwise different:
	Assume $x(k_1,c_1) = x(k_2,c_2)$ holds for some $k_1,c_1,k_2,c_2 \in [1,p]$. Considering this equation modulo $q$, we find $(2c_1-1) \equiv (2c_2-1) \mod q$. Since $q>2p-1$, it follows: $c_1=c_2$. Hence the equation simplifies to $q(2k_1-1)=q(2k_2-1)$, so also $k_1=k_2$. This shows that the x-numbers are created by an injective map. The same arguments work for pairs of y-numbers and pairs of z-numbers.
	Now assume $x(k_1,c_1) = y(\ell_2,c_2)$. It follows: $(2c_1-1)+(2c_2-1) \equiv 0 \mod q$. This is a contradiction, since $4p-2<q$.
	Assume $x(k_1,c_1) = z(k_2,\ell_2)$ or $y(\ell_1,c_1) = z(k_2,\ell_2)$. In both cases $(2c_1-1) \equiv 0 \mod q$ would follow, which is a contradiction.
	
	Now we consider all the ways 3 or fewer of these numbers can be a yes-instance to the Partition problem.
	A single number cannot be a yes-instance.
	Two numbers also cannot be a yes-instance, since $S$ is a set and every number is different.
	Here we work out only some of the possible three-number combinations. The rest can be calculated similarly.
	\begin{itemize}
		\item $z(k_1,\ell_1) = x(k_2,c_2)+y(\ell_3,c_3)$. Considering this equation modulo $q$, we find that $c_2=c_3$. Then, dividing by $q$ and again applying modulo, we get $k_1=k_2$ and finally $\ell_1=\ell_3$.
		\item $z(k_1,\ell_1)+x(k_2,c_2) = y(\ell_3,c_3)$. It would follow: $(2c_2-1)+(2c_3-1) \equiv 0 \mod q$, which is not possible, as we have seen before.
		%\item $x(k_2,c_2) = y(\ell_3,c_3)+z(k_1,\ell_1)$. Same as above.
		\item $x(k_1,c_1) = y(\ell_2,c_2)+y(\ell_3,c_3)$. It would follow: $(2c_1-1)+(2c_2-1)+(2c_3-1) \equiv 0 \mod q$. This is not possible, since $0<(2c_1-1)+(2c_2-1)+(2c_3-1) \leq 6p-3<q$.
		%\item $y=x+x$. Same as above.
		%\item $x(k_1,c_1) = z(k_2,\ell_2)+z(k_3,\ell_3)$. It would follow: $(2c_1-1) \equiv 0 \mod q$, a contradiction.
		%\item $y=z+z$. Same as above.
		%\item $z=x+x$ or $z=y+y$. Implies $(2c_1-1)+(2c_2-1) \equiv 0 \mod q$, not possible.
		\item $x(k_1,c_1) = x(k_2,c_2)+x(k_3,c_3)$. Consider this equation modulo 2. Since $q$ is even, we would obtain $-1\equiv-2 \mod 2$, which is a contradiction. The case of three y-numbers is dealt with in the same way. In the case of three z-numbers, first divide by $q$.
		%\item $x=x+y$. Same as $x+x=x$.
		%\item $x=x+z$. Same as $x=x$.
		%\item $y=y+x$. Same as $y+y=y$.
		%\item $y=y+z$. Same as $y=y$.
		%\item $z=z+x$ or $z=z+y$. Same as $z+z=x$.
	\end{itemize}
	After considering all combinations, we find that the only way 3 numbers can be a yes-instance to Partition, is by choosing one number from each family x,y and z; importantly these numbers must have matching choices for row, column and color.
	
	Now let $B_1,\dots,B_m$ be a solution to \PMPP{}, i.e.\ the $B_i$ are nonempty yes-instances to Partition, are pairwise disjoint and their union is a subset of $S$.
	As we have shown, each $B_i$ contains at least three elements. Since $|S|=3m$, every element of $S$ is used and no $B_i$ can contain more than three elements. Then each $B_i$ corresponds to a triple of x-,y- and z-numbers, which in turn corresponds to a row $k$, a column $\ell$ and a color $c$. We then fill our partial Latin square, by coloring the cell at row $k$ and column $\ell$ with $c$, repeating this for every $B_i$.
	Since every z-number was used, the Latin square must now be filled. It is also a valid coloring, since for every row/column each missing color appears only in one x-number/y-number.
	
	For the other direction, consider a valid completion of the partial Latin square. Then for each of the $m$ new colorings $c_i$ in the cell at row $k_i$ and column $\ell_i$, we create $B_i:=\{z(k_i,\ell_i), x(k_i,c_i), y(\ell_i,c_i)\}$. Then each $B_i$ is a yes-instance to the Partition problem, and is contained in $S$. The created sets are pairwise disjoint, since the Latin square would otherwise have a collision. 
\end{proof}

Before giving a detailed proof of \autoref{thm:dhard_star}, we sketch the ideas behind it:

\begin{figure}
	\begin{center}
		\begin{tikzpicture}[scale=1.1]
			\tikzset{
				knoten/.style={draw,rounded corners,minimum size=5pt}
			}
			\node[knoten] (c) at (0,0) {$c$};
			\node[knoten] (v1) at (0:1) {$v_{1}$};
			\node[knoten] (v2) at (90:1) {$v_{2}$};
			\node[knoten] (v3) at (180:1) {$v_{3}$};
			\node[knoten] (v4) at (270:1) {$v_{4}$};
			\node (lb) at (-1,1) {$G$};
			
			\draw (c) -- (v1) node[draw=none,fill=none,font=\scriptsize,midway,above] {$5$};
			\draw (c) -- (v2) node[draw=none,fill=none,font=\scriptsize,midway,left] {$3$};
			\draw (c) -- (v3) node[draw=none,fill=none,font=\scriptsize,midway,above] {$4$};
			\draw (c) -- (v4) node[draw=none,fill=none,font=\scriptsize,midway,left] {$2$};

			\begin{scope}[shift={(3,0)}]
				\node (lb) at (-1,1) {$(\cL,f)$};
				\draw [|-|,thick,blue] (1,0+0.1) .. controls (0,0.1) .. (0,1+0.1) node[midway,above]{3};
				\draw [|-|,thick,cyan] (0:1) -- (180:1) node[midway,below,xshift=0.1cm]{2};
				\draw [|-|,thick,red] (0,-1-0.1) .. controls (0,-0.1) ..  (-1,0-0.1) node[midway,below]{2};
			\end{scope}
			
			\node (arr) at (3+1.75,0) {$\rightarrow$};
			
			\begin{scope}[shift={(6.5,0)}]
				\tikzset{
					knoten/.style={draw,rounded corners,minimum size=5pt}
				}
				\node[knoten,fill=magenta!50] (w1) at (0:1) {$w_{1}$};
				\node[knoten,fill=yellow!50] (w2) at (90:1) {$w_{2}$};
				\node[knoten,fill=yellow!50] (w3) at (180:1) {$w_{3}$};
				\node[knoten,fill=magenta!50] (w4) at (270:1) {$w_{4}$};
				\node (lb) at (-1,1) {$H_{\cL}$};
				
				\draw[blue] (w1) -- (w2) node[midway,above] {$3$};
				\draw[cyan] (w1) -- (w3) node[midway,above] {$2$};
				\draw[red] (w3) -- (w4) node[midway,above] {$2$};
				
				\node (arr) at (3,0) {$\implies 3+4=2+5$};
			\end{scope}
		\end{tikzpicture}	
	\end{center}
	\caption{Example of the relationship between line concepts on stars and number partitions.}\label{fig:dhard_star:example}
\end{figure}
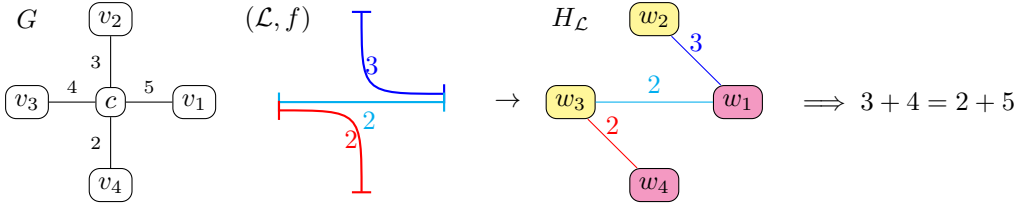

Consider an optimal feasible line concept $(\cL,f)$ on a star graph $G$ where $\fmin=\fmax$ and the cost only includes the number of lines. We observe that whenever a one-edge line $\ell_1\in \cL$ shares an edge with some other line $\ell_2 \in \cL$, we may obtain an equivalent line concept $\cL'$ without edge-sharing, by shortening $\ell_2$ and increasing the frequency of $\ell_1$. 
Hence, we may assume that only two-edge lines can share an edge with one another. We can visualize the edge intersection between lines as a graph $H_{\cL}$ where each two-edge line $(v_i,c,v_j)$ is represented by an edge $\{v_i,v_j\}$. 
The resulting graph $H_{\cL}$ cannot have a cycle, because otherwise we could shift around frequencies to remove some line, yielding a better line concept.
Hence $H_{\cL}$ is a forest, and we can use each contained tree, as shown in \autoref{fig:dhard_star:example}, to obtain a number partition on some subset of the edge frequencies.
This makes the equivalence to \PMPP{} apparent.

With this intuition, the proof of \autoref{thm:dhard_star} proceeds as follows:
\begin{proof}\label{proof:dhard_star}
	We show a reduction of \PMPP{} to the decision version of \LPAL{}.
	Let the set of positive integers $S=\{x_1,\dots,x_m\}$ and the lower bound $K$ be given as an input to \PMPP{}. 
	
	We construct our instance $I=(G=(V,E), \dfix,\cfix, c,\fmin,\fmax)$ with decision parameter $K'$ as follows:
	\begin{multicols}{4}
		\begin{itemize}
			\item $\dfix := 1$
			\item $\cfix := 0$
			\item $c :\equiv 0$
			\item $K' := m-K$
		\end{itemize}
	\end{multicols}
	For the construction of $G$ and $\fmin=\fmax$ see \autoref{fig:dhard_star}.
	\begin{figure}
		\centering
		\begin{tikzpicture}[scale=2]
			\tikzset{
				knoten/.style={draw,rounded corners,minimum size=5pt}
			}
			\node[knoten] (c) at (0,0) {$c$};
			\node[knoten] (v1) at (-1,0) {$v_{1}$};
			\node[knoten] (v2) at (-0.707,0.707) {$v_{2}$};
			\node[knoten] (v3) at (0,1) {$v_{3}$};
			\node[knoten] (vn) at (1,0) {$v_{m}$};
			
			\node (dots) at (0.5,0.5) {$\ddots$};
			
			\draw (c) -- (v1) node[draw=none,fill=none,font=\scriptsize,midway,above] {$x_1$};
			\draw (c) -- (v2) node[draw=none,fill=none,font=\scriptsize,midway,above] {$x_2$};
			\draw (c) -- (v3) node[draw=none,fill=none,font=\scriptsize,midway,right] {$x_3$};
			\draw (c) -- (vn) node[draw=none,fill=none,font=\scriptsize,midway,above] {$x_m$};

			%\draw (v2) -- (dots)  {};
			%\draw (dots) -- (vnn)  {};
			
		\end{tikzpicture}
		\caption{Line planning instance constructed in \autoref{thm:dhard_star}.}\label{fig:dhard_star}
	\end{figure}
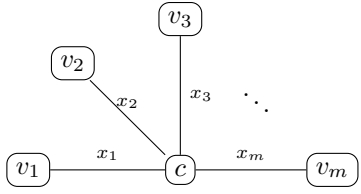
	
	Let a solution $Sol=((A_1,B_1),\dots,(A_K,B_K))$ to the \PMPP{} instance be given, i.e.\ the sets $A_1,\dots,A_K,B_1,\dots,B_K$ are nonempty and form a partition of some subset of $S$, with $\sum A_i = \sum B_i$ for all $i\in[1,K]$. 
	%We construct a solution to \LPAL{} using the following algorithm:
	We construct a solution to \LPAL{} using \autoref{algo:PMPP2LPAL}.
	
	\begin{algorithm}[h]
		\caption{Constructing a line concept from a \PMPP{} solution} \label{algo:PMPP2LPAL} 
		\begin{algorithmic}[1]
			\State $\cL=\emptyset$
			\For{$(A,B) \in Sol$}
			\State Treat the numbers in $A$ and $B$ as mutable data structures, which can store a value and an index.
			\State Assign to each $y$ in $A$ and $B$ an index $i$ such that $x_i = y$.
			\While{$|A|>0$ and $|B|>0$}
			\State $a = \min(A)$
			\State $b = \min(B)$
			\If{$a < b$}
			\State Add to $\cL$ a line from $v_{a.index}$ to $v_{b.index}$ with frequency $a$
			\State A.remove(a)
			\State b -= a
			\ElsIf{$a > b$}
			\State Add to $\cL$ a line from $v_{a.index}$ to $v_{b.index}$ with frequency $b$
			\State B.remove(b)
			\State a -= b
			\Else
			\State Add to $\cL$ a line from $v_{a.index}$ to $v_{b.index}$ with frequency $a$
			\State A.remove(a)
			\State B.remove(b)
			\EndIf
			\EndWhile
			\EndFor
			\For{$i\in[1,n]$ where $v_i$ is not covered yet}
			\State Add to $\cL$ a line from $v_{i}$ to $c$ with frequency $x_i$
			\EndFor
		\end{algorithmic}
	\end{algorithm}
	
	This algorithm uses each partition to reduce the number of lines needed to create a feasible line concept. We observe: %the following properties of the algorithm:
	\begin{itemize}
		\item When entering line 2, it holds: Every edge of $G$ which is covered by some lines of $\cL$, has the lines add up to the correct frequency.
		\item When entering line 5, it holds: $\Sigma A = \Sigma B$. 
		\item Every iteration of the while-loop adds one new line, and removes at least one entry from $A$ or $B$.
		\item Since $\Sigma A = \Sigma B$ is an invariant, the case of line 17 must be reached eventually for every $(A,B) \in Sol$.
	\end{itemize}
	Define $S':= \bigcup_{i=1}^{K} A_i \cup B_i$. It follows: 
	\begin{itemize}
		\item In each iteration of the outer for-loop, at most $|A|+|B|-1$ lines are created.
		\item Before reaching line 23, at most $\sum_{i=1}^{K} (|A_i|+|B_i|-1) = |S'|-K$ lines are created.
		\item In the remaining algorithm, exactly $|S\setminus S'|$ lines are created.
	\end{itemize}
	It is easy to check that the algorithm creates a feasible line concept. It contains at most $|S'|-K + |S\setminus S'| = |S|-K = m-K = K'$ lines, so its cost is below the bound.

	For the other direction, consider a solution $(\cL,f)$ to $I$ with cost at most $K'$, i.e.\ with at most $m-K$ lines.
	Let $m_1$ be the number of one-edge lines, and $m_2$ the number of two-edge lines in $\cL$.
	We may assume that one-edge lines do not overlap with any other line, as in that case we can cut down the other lines and adjust the frequency, preserving the line concept's feasibility.
	We construct an auxiliary graph $H_{\cL}$ as follows: Start with the vertices $v_1,\dots,v_m$. 
	Any two-edge line $l\in \cL$ connects some $v_i$ to some $v_j$, with $i\neq j$. For each such line, create an edge $\{v_i,v_j\}$ in $H_{\cL}$.
	Any one-edge line $l\in \cL$ connects $c$ to some $v_i$. For each such line, delete the vertex $v_i$. We do not need to delete any edges, since we assumed that one-edge lines do not overlap.
	Now $H_{\cL}$ has $m-m_1$ vertices and $m_2 = K'-m_1 = m-m_1-K$ edges. Hence $H_{\cL}$ contains a forest of at least $K$ separate trees. Call these trees $T_1,\dots,T_K$.
	Since every edge of $G$ has a positive $\fmin$, every edge must be covered by some line $l\in \cL$. It follows that every vertex of $H_{\cL}$ has at least one incident edge. This means that each tree $T_i$ contains at least two vertices.
	
	Each tree $T_i$ is also a bipartite graph. Let $(P_1:=\{v_j \colon j\in J_1\}, P_2:=\{v_j \colon j\in J_2\})$ for some $J_1,J_2$ be a bipartition of $T_i$. By construction, every line $\ell\in\cL$ is either disjoint from $T_i=P_1 \cup P_2$, or connects $P_1$ with $P_2$, i.e.\ $\emptyset \neq V(\ell) \cap P_1 \iff \emptyset \neq V(\ell) \cap P_2$. Therefore it holds:
	\begin{align*}
		\sum_{j\in J_1} x_j &= \sum_{j\in J_1} \sum_{\substack{\ell\in\cL:\\ \{v_j, c\}\in E(\ell)} } f_\ell
		= \sum_{\substack{\ell\in\cL:\\ \emptyset \neq V(\ell) \cap P_1 }} f_\ell \\
		&= \sum_{\substack{\ell\in\cL:\\ \emptyset \neq V(\ell) \cap P_2 }} f_\ell 
		= \sum_{j\in J_2} \sum_{\substack{\ell\in\cL:\\ \{v_j, c\}\in E(\ell) }} f_\ell
		= \sum_{j\in J_2} x_j .
	\end{align*}
	Then $(\{ x_j \colon j\in J_1 \}, \{ x_j \colon j\in J_2 \})$ is a nonempty solution to the Partition problem. We repeat this for every tree $T_i$, to get $K$ disjoint number partitions, solving \PMPP{}.
	
	To show hardness for the case $\fmax\equiv\infty$, we can apply \autoref{lem:fmax_infty}.
\end{proof}

The presented hardness results in this section actually show \emph{strong} NP-hardness, i.e.\ even when we restrict the numerical parameters of \LPAL{} instances to be (polynomially) small compared to the graph, the problem remains NP-hard.

\section{Hardness of approximation}\label{sec:approximation}
In this section we show that in the case $\dfix=0$, no polynomial time approximation algorithm for \LPAL{} can have a sub-linear performance ratio.
Even when additionally $\fmax\equiv\infty$, no constant-factor polynomial time approximation is possible.
\begin{theorem}
	Assuming $P\neq NP$, the problem \LPAL{} cannot be approximated within a factor of $n^{1-\epsilon}$ by a polynomial-time algorithm, even in the case $\dfix=0$.
\end{theorem}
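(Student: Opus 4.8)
The plan is to establish the bound by a gap-preserving reduction from a problem that is itself $n^{1-\epsilon}$-hard to approximate; the most convenient choice turns out to be the Chromatic Number (one may equally start from Maximum Clique via complementation), which cannot be approximated in polynomial time within $n^{1-\epsilon}$ unless $P=NP$. First I would normalise the cost structure: for $\dfix=0$ the cost of a line concept is $\cost((\cL,f))=\cfix\sum_{\ell\in\cL}f_\ell+\sum_{e\in E}c_e\,\ftotal_e$, so it suffices to prove hardness already for $c\equiv 0$ and $\cfix=1$, where the cost is simply the number of lines counted with their frequencies, $\sum_{\ell\in\cL}f_\ell$. A feasible line concept is then just a family of simple paths, each taken with a positive multiplicity, covering every edge $e$ between $\femin$ and $\femax$ times, and the task is to minimise the total number of path-copies. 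Since this is a special case of \LPAL{} with $\dfix=0$, any inapproximability shown in this restricted setting transfers to the theorem.

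The core of the reduction is a gadget tying ``which demand edges a single line can serve'' to ``which vertices of the input graph $H$ form an independent set''. To each vertex $v$ of $H$ I would attach a short \emph{demand path} of edges with $\femin=\femax=1$ that must be traversed in full by any line serving $v$, together with ``connector'' edges ($\femin=0$, $\femax$ unbounded) that let a line pass from one demand path to the next at no extra cost. The wiring is chosen so that the demand paths of $v$ and $w$ can lie on one common simple path exactly when $vw\notin E(H)$: for every edge $vw$ of $H$ the two demand paths are routed through a shared blocking vertex, so a simple path using both would have to visit that vertex twice. Then the demand paths served by one line always index an independent set of $H$, and conversely every independent set is served by some legitimate line; hence covering all demand edges forces a covering of $V(H)$ by independent sets, and the optimum of the constructed instance equals, up to lower-order terms, $\chi(H)$. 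A factor-$\rho$ approximation for \LPAL{} would thus give a factor-$\rho$ approximation for $\chi(H)$, and the yes/no gap for the latter --- few colour classes versus many --- becomes a ratio of $n^{1-\epsilon}$.

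Two points need care, and the second is where I expect the real difficulty to lie. First, the gadget must be tight in both directions simultaneously: every independent set of $H$ must be realisable by an honest line, and no ``cheating'' line may serve the demand paths of two adjacent vertices; pinning both down at once is the delicate combinatorial part. Second --- and more seriously --- to reach the exponent $1-\epsilon$ rather than merely some fixed $n^{\delta}$, the whole construction must be of nearly linear size in $|V(H)|$: the $n^{1-\epsilon}$-hardness of $\chi$ is only available on dense graphs, so one cannot afford a separate blocking vertex for each of the $\Theta(|V(H)|^2)$ edges of $H$, since that naive version would square the vertex count and leave only an $n^{1/2-\epsilon}$ bound. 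The adjacency structure of $H$ must therefore be reused directly inside the constructed graph, with only a nearly linear number of auxiliary vertices and the blocking realised compactly. Producing a gadget that is at once tight and of nearly linear size is the crux of the argument.
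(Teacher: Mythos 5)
You have correctly identified the crux of your own construction, but you have not supplied it, and that missing piece is exactly where the proof lives. To encode ``$v$ and $w$ are adjacent in $H$'' you propose a shared blocking vertex $b_{vw}$ that both demand paths traverse, so that a simple path cannot serve both; since a simple path excludes only \emph{repeated vertices}, this is essentially the only mechanism available, and it forces one dedicated vertex per edge of $H$. The hard gap instances for Chromatic Number are dense, so your constructed PTN has $\Theta(|V(H)|^2)$ vertices, and a gap of $|V(H)|^{1-\epsilon}$ in $\chi$ translates only into $N^{1/2-\epsilon}$ in the parameter $N$ that the theorem is stated in (the paper measures approximation ratios in the number of vertices of the \LPAL{} instance). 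You cannot repair this by amplification either: the paper's chaining trick boosts a ``$1$ versus $\geq 2$'' gap to ``$1$ versus $\geq k+1$'' precisely because the yes-side optimum is the constant $1$; with a yes-side optimum of $\chi(H)$, which is itself polynomially large in the gap instances, no analogous additive amplification is available. There is also a secondary soundness gap you wave away: with $\femin=\femax=1$ on the demand edges, nothing forces a single line to traverse a demand path end to end --- distinct lines may cover disjoint sub-segments of $P_v$ --- so the lower bound ``number of lines $\geq$ minimum number of independent sets covering $V(H)$'' does not follow without an additional argument that fragmentation never pays off in the no-instances. As written, the proposal reduces the theorem to an unconstructed gadget whose existence is doubtful, so it does not constitute a proof.

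The paper avoids all of this by starting from a problem with a \emph{constant} yes-side optimum. It takes the reduction from Hamiltonian Path of \cite{GHS16} (yielding an instance solvable by one line iff $G$ has a Hamiltonian path, with two forced endpoints $v_1,v_2$), chains $k$ copies of it in series through connector edges with $\femax=1$, and observes that a yes-instance still needs only one line while a no-instance needs at least $k+1$ lines, because each of the $k$ copies must be visited at least twice and the capacity-one connectors can pay for only $k-1$ of the required $2k$ visits. Choosing $k\approx n^{(1-\epsilon)/\epsilon}$ keeps the instance polynomial in size while making the multiplicative gap exceed $(kn)^{1-\epsilon}$, so any $n^{1-\epsilon}$-approximation would decide Hamiltonian Path. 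If you want to salvage your line of attack, this ``powering a constant gap'' idea is the ingredient to borrow.
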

\begin{proof}
We prove this using a gap-producing reduction from the Hamiltonian path  problem (for a definition, see \cite{plesn1979np}).

Let $G$ be a directed graph of size $n$. We claim that a line planning instance \linebreak $I=(G', \dfix, \cfix, c, \fmin, \fmax)$ with $\dfix=0$, $\cfix=1$ and $c\equiv0$ can be constructed from $G$ in polynomial time, where two special vertices $v_1$ and $v_2$ are marked, and it holds:
\begin{itemize}
	\item If $G$ has a Hamiltonian path: $I$ can be solved using a single line, with endings $v_1$ and $v_2$.
	\item If $G$ has no Hamiltonian path: $I$ cannot be solved using a single line, i.e.\ $I$ requires at least two lines.
\end{itemize}
To construct $I$, we first apply the reduction from \cite{GHS16} to $G$, creating an \LPAL{} instance that can be solved using a single line if and only if $G$ has a Hamiltonian path. Then we add two vertices $u_1$ and $u_2$, which we connect to all other vertices, using edges having $\femin=0$. To $u_1$ we connect a new vertex $v_1$, likewise we connect a new vertex $v_2$ to $u_2$, this time using edges having $\femin=1$. This forces a line from $v_1$ to $v_2$, but otherwise preserves the reduction equivalence.

Starting from $G'$, for any $k \in \mathbb{N}_{\geq 1}$, we can construct a graph $G_k$ as follows: Create $k$ copies of $G'$, and for all $i\in [1,k-1]$, add an edge between $v_2$ of copy $i$ and $v_1$ of copy $i+1$. Call these $k-1$ new edges \emph{connectors}. Then $G_k$ consists of $k n$ vertices.
Consider a new line planning instance $I_k$ on $G_k$, where we also copied the weights from $I$ onto $G_k$, and have $\femax=1$ on the connectors.
If $G$ has a Hamiltonian path, then $I_k$ can be solved using a single line, which we get by concatenating the lines for each copy of $G'$, with help of the connectors.

We say a line $\ell$ \emph{visits} copy $i$, if the vertices of $\ell$ and the $i$-th copy of $G'$ intersect. If $G$ has no Hamiltonian path, then each copy of $G'$ must be visited by at least two different lines, totaling at least $2k$ visits. A single line can visit multiple copies of $G'$, but must cross a connector for each additional visit. Since $\femax=1$, every connector can only be crossed once. This affords us $k-1$ visits. The remaining $k+1$ visits must be paid by different lines, i.e.\ any line concept $\cL$ solving $I_k$ needs $|\cL|\geq k+1$.

Now assume that for some $\epsilon \in (0,1]$, we can approximate \LPAL{} within $n^{1-\epsilon}$ using an algorithm $A$  in polynomial time.
Then we define $k:= \lfloor 1+ n^{(1-\epsilon)/\epsilon} \rfloor$, i.e.\ $k$ is the smallest integer larger than $n^{(1-\epsilon)/\epsilon}$.
Given a graph $G$ as input to the Hamiltonian path problem, we construct $I_k$, which has size $kn$, which is bounded by a polynomial in $n$. Then apply algorithm $A$, to get an approximate solution of cost $a$.
If $G$ has a Hamiltonian path, then the optimal solution to $I_k$ has value $1$. Hence $a \leq 1\cdot (kn)^{1-\epsilon}$ and 
$a / k \leq n^{1-\epsilon} k^{-\epsilon} < n^{1-\epsilon} \cdot (n^{(1-\epsilon)/\epsilon})^{-\epsilon}
= 1$. Thus $a < k$. \\
If $G$ has no Hamiltonian path, then the optimal solution to $I_k$ has value at least $k$, hence also $a\geq k$.
By comparing $a$ to $k$, we can determine whether $G$ has a Hamiltonian path in polynomial time, implying $P=NP$.
\end{proof}

%Note that this result is tight in the sense that an $n$-approximation for \LPAL{} is possible in polynomial time: Given $I=(G=(V,E), \dfix, \cfix, c, \fmin, \fmax)$, simply create a single-edge line for every edge that needs to be covered, i.e.\ every $e\in E':=\{ e\in E \mid \femin>0 \}$. The cost for this is \[c_a := \sum_{e\in E'} (\dfix + \femin\cdot(\cfix+c_e)) .\]
%Since any line can cover at most $|V|-1$ edges, even an optimal solution requires at least $|E'|/(|V|-1)$ lines. This results in an optimal cost of at least \[c_o := |E'|/(|V|-1)\cdot\dfix + \sum_{e\in E'} \femin \cdot \cfix/(|V|-1) + \sum_{e\in E'} \femin\cdot c_e ,\]
%yielding an approximation ratio of
%\begin{align*}
%c_a/c_o &= \frac{\sum_{e\in E'} \dfix + \sum_{e\in E'} \femin\cfix} {c_o} + \frac{\sum_{e\in E'} \femin\cdot c_e} {c_o} \\
%&\leq \frac{\sum_{e\in E'} \dfix + \sum_{e\in E'} \femin\cfix} {|E'|/(|V|-1)\cdot\dfix + \sum_{e\in E'} \femin \cdot \cfix/(|V|-1)} + \frac{\sum_{e\in E'} \femin\cdot c_e} {\sum_{e\in E'} \femin\cdot c_e} \\
%&\leq |V|-1 + 1 = n .
%\end{align*}
%This of course is a terrible approximation. To get better results we need to put more restrictions on the instances we consider. In the preceding hardness proof, it was essential that we can use $\fmax$ to bound the frequencies.

Since an $n$-approximation (or worse) for \LPAL{} is useless in practice, we want to weaken the lower bound by putting more restrictions on the considered instances. In the preceding hardness proof, it was essential that we can use $\fmax$ to bound the frequencies.

In contrast we now consider instances, where $\fmax\equiv\infty$.
\begin{theorem} \label{thm:approx2}
	Assuming $P\neq NP$, the problem \LPAL{} cannot be approximated within any constant factor by a polynomial-time algorithm, even in the case $\dfix=0$ and $\fmax\equiv\infty$.
\end{theorem}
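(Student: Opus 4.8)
The plan is to give a gap-producing reduction from the Hamiltonian path problem, reusing the building block $I$ from the proof of the previous theorem but this time never relying on $\fmax$, so that the resulting instances automatically satisfy $\fmax\equiv\infty$. Recall that in $I$ we have $\dfix=0$, $\cfix=1$ and $c\equiv0$, so the cost of a line concept $(\cL,f)$ is exactly $\sum_{\ell\in\cL}f_\ell\ge|\cL|$; thus the cost essentially counts lines, weighted by frequency. The gadget $G'$ (the reduction of \cite{GHS16} equipped with the two forced port vertices $v_1,v_2$) has the property that its forced edges are covered by a single line exactly when $G$ has a Hamiltonian path and otherwise require at least two lines, and more generally that any feasible solution induces a cover of a fixed subgraph by simple paths. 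Given a target constant $C\ge 1$, I would assemble $C$ copies $G'_1,\dots,G'_C$ of this gadget into one instance $I_C$ of size polynomial in $n$ (for each fixed $C$), linking consecutive copies through connector gadgets designed so that a single line of frequency $1$ can thread all $C$ copies when $G$ is Hamiltonian — yielding an optimum of $\cfix$ — while forcing at least $C$ lines when $G$ is not Hamiltonian.

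The verification would proceed in three steps. First, the Hamiltonian-yes direction: concatenate the per-copy Hamiltonian lines across the connectors into one line, give it frequency $1$, and check feasibility and that the resulting cost is the constant $\cfix$. Second, the Hamiltonian-no direction: show that every feasible line concept for $I_C$ has total frequency, hence cost, at least $C$. The core of this argument is that each copy $G'_i$ carries forced ``vertex edges'' that must be covered, that the restriction of any single line to one copy is a single subpath of that copy, and that — since $G$ has no Hamiltonian path — one subpath per copy cannot cover a copy's forced edges, so each copy must be ``visited'' by at least two lines; the connector gadgets must then guarantee that a line passing through several copies does not collect these extra visits for free. Combining the per-copy lower bound with this ``no free spanning'' property yields $\sum_\ell f_\ell\ge C$. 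Third, since $I_C$ is computable in polynomial time for each fixed $C$ and its optimum is $\cfix$ in the yes-case but at least $C\cfix$ in the no-case, any polynomial-time $C$-approximation would decide the Hamiltonian path problem, so $P=NP$; letting $C$ range over all constants gives the claim.

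The main obstacle is exactly this ``no free spanning'' property. With $\fmax\equiv\infty$ nothing prevents one line from running through the entire construction, and a plain chain of copies (as in the previous proof) would let two ``super-lines'' jointly cover everything in the non-Hamiltonian case, giving only a factor-$2$ gap; so the $\femax=1$ constraint on connectors used there must now be replaced by purely combinatorial structure. The real work is to design the inter-copy gadgetry — possibly in a nested or recursive fashion — so that any line visiting $t$ distinct copies is forced to traverse each of them in a Hamiltonian manner, which is impossible once $G$ has no Hamiltonian path, thereby confining cost-efficient lines essentially to single copies. I expect this to be where all of the technical effort lies, and it is presumably also the reason why in this regime one obtains only the weaker statement that no constant-factor approximation exists, rather than the $n^{1-\epsilon}$ bound available when $\fmax$ may be used.
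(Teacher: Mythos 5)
Your high-level plan (gap amplification by composing copies of the Hamiltonian-path gadget, with the yes-instance solvable by one line and the no-instance forcing many lines) matches the paper's strategy, and you correctly diagnose why a plain chain of copies fails once $\fmax\equiv\infty$: two ``super-lines'' can jointly sweep a linear chain, so only a factor-$2$ gap survives. But the proof stops exactly where the paper's proof begins. The entire content of the theorem is the construction you defer as ``the real work'': the inter-copy gadgetry that prevents a single cheap line from collecting visits to many copies for free. The paper resolves this not with connectors between consecutive copies but with a \emph{substitution product}: after normalizing the gadget $I$ to be ``nice'' (exactly two antennae, i.e.\ two degree-one ports with $\fmin=1$, which every solving line must have as its endpoints), one defines $I\times J$ by replacing every forced edge of $J$ with a copy of $I$, identifying the edge's endpoints with that copy's antenna tips. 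The key lemma is then: if $I$ needs at least $k$ lines and $J$ needs at least $2$, then $I\times J$ needs at least $k+1$. Its proof uses the two-port structure to show every line restricts properly to every copy, hence (if only $k$ lines existed) every line would visit every copy, and restricting one such line to the vertices of $J$ would produce a single path solving $J$ --- a contradiction. Iterating gives $I^k$ of size at most $n^{2k}$ with gap $1$ versus $k+1$, which beats any fixed approximation factor $\alpha$ by taking $k=\lfloor\alpha\rfloor$.

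So the gap in your proposal is concrete and essential: you assert that gadgets with the ``no free spanning'' property exist and can be built in polynomial time for each fixed $C$, but you neither construct them nor prove the per-copy lower bound aggregates to $C$ rather than to $2$. In particular, your intended invariant --- ``any line visiting $t$ distinct copies is forced to traverse each of them in a Hamiltonian manner'' --- is exactly the statement that needs the two-antenna normalization and the restriction argument; without specifying how copies are attached (and handling lines that start \emph{inside} a copy, whose restriction to that copy need not be a single subpath), the no-direction lower bound of $C$ does not follow. There is also a small bookkeeping point you would need: the reduction of \cite{GHS16} need not produce a gadget with exactly two ports, so one must enumerate the $O(n^2)$ ways of attaching missing antennae and run the argument for each, since only some completion is single-line solvable when $G$ is Hamiltonian.
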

\begin{proof}\label{proof:thm:approx2}
	In this proof we assume all \LPAL{} instances \linebreak $I=(G=(V,E), \dfix, \cfix, c, \fmin, \fmax)$ to have $\fmax\equiv\infty$, $\dfix=0$, $\cfix=1$, $c\equiv0$ and $\fmin_e \in \{0,1\}$ for all $e \in E$.
	
	Let $I=(G=(V,E), \dfix, \cfix, c, \fmin, \fmax)$ be an \LPAL{} instance. We call any edge $e=\{v_1,v_2\} \in E$ with $\fmin_e = 1$ and $\deg(v_1)=1$ an \emph{antenna}. Declare $v_1$ to be the \emph{tip} of the antenna.
	We call $I$ \emph{nice} if it has exactly two distinct antennae.
	
	Let $p$ be a path on $G$ and $V' \subseteq V$. The \emph{restriction} of $p$ to $V'$ is the sequence obtained from $p$ by removing all vertices not in $V'$. We call the restriction \emph{proper} if it is a path on $V'$.
	
	When we add a new antenna to an instance $I$, it may need more lines to be solved, but certainly not fewer. If $I$ can be solved using just a single line, then it has at most two antennae. If $I$ has exactly two antennae, then the single line must have its ends at the antenna tips.
	However, if $I$ has fewer than two antennae, there is a way to attach new antennae until we have two, such that the resulting instance can still be solved using a single line.
	
	Let $I$ and $J$ be nice instances.
	Define $I\times J$ as the following construction:
	Take $J$ and replace every edge $e=\{u,v\}$, where $\femin=1$, by a copy of $I$; then $u$ and $v$ are identified with the antenna tips of that copy.
	For an example of $I\times J$, see \autoref{fig:ItimesJ}.
	
	We claim that $I\times J$ is also nice. In particular the antennae of $I\times J$ are part of two different copies of $I$. Denote these copies by $A^1_{I\times J}$ and $A^2_{I\times J}$.
	Let $\ell$ be a path on $I\times J$ and $C$ be some copy of $I$ which is part of $I\times J$. There are only two vertices where $\ell$ can enter or leave $C$. If $\ell$ starts outside $C$, it can enter $C$ at most once. In that case, the restriction of $\ell$ to $C$ is proper. If $\ell$ starts inside $C$, it may leave and enter again, which makes the restriction improper.
	
	Now assume $I$ can be solved by a single line $\ell_I$, and $J$ by a single line $\ell_J$. Because $I$ is nice, $\ell_I$ ends in its antennae. We obtain a line that solves $I\times J$, by replacing every edge $e$ on $\ell_J$, where $\femin=1$, by a copy of $\ell_I$.
	
	If we instead assume that $I$ requires at least $k$ lines to be solved and $J$ requires at least two, then it follows that $I\times J$ requires at least $k+1$. We prove this by contradiction: Assume there are $k$ lines $\ell_1, \dots, \ell_k$ that solve $I\times J$. 
	Let $\iota\in \{1,2\}$. Because of the special positioning of $A^\iota_{I\times J}$, it can be observed that any line starting inside $A^\iota_{I\times J}$, which leaves $A^\iota_{I\times J}$, cannot enter $A^\iota_{I\times J}$ again. Hence, any line on $I \times J$ can be restricted properly to $A^\iota_{I\times J}$.
	Since each copy of $I$ inside $I\times J$ requires $k$ lines to be solved, we deduce that each line $\ell_i$ for $i\in [1,k]$ intersects both $A^1_{I\times J}$ and $A^2_{I\times J}$. If some line would start inside a copy of $I$, which is neither $A^1_{I\times J}$ nor $A^2_{I\times J}$, then it could not visit both $A^1_{I\times J}$ and $A^2_{I\times J}$, as it would get stuck in an antenna of $I\times J$. From this we conclude that any line $\ell_i$ can be properly restricted to any copy of $I$ in $I\times J$. Again, each copy of $I$ inside $I\times J$ requires $k$ lines to be solved, so now every line $\ell_i$ has to intersect every copy of $I$.
	Choose an arbitrary $\ell_i$. Because it visits each copy of $I$ associated to every edge $e$ of $J$ where $\femin=1$, we can restrict it to the vertices of $J$, and obtain a path $r$ on $J$, visiting all edges $e$ with $\femin=1$. But then $r$ would solve $J$, which contradicts our assumption.
	\begin{figure}
		\centering
		\begin{tikzpicture}[scale=1.8]
			\tikzset{
				k/.style={fill,circle,inner sep=1.0pt},
				e/.style={thick,dashed},
				eh/.style={thick,red}
			}
			\begin{scope}[shift={(0,1)}]
				\node[k] (1a) at (0,0) {};
				\node[k] (2a) at (1,0) {};
				\node[k] (3a) at (0.5,1) {};
				\node[k] (xa) at (0-0.25,0.5) {};
				\node[k] (ya) at (1+0.25,0.5) {};
				
				\node at (0.5+1, 0.5) {$\times$};
				
				\node[k] (1b) at (0+2,0) {};
				\node[k] (2b) at (1+2,0) {};
				\node[k] (3b) at (0.5+2,1) {};
				\node[k] (xb) at (0+2-0.25,0.5) {};
				\node[k] (yb) at (1+2+0.25,0.5) {};
				
				\node at (0.5+3, 0.5) {$=$};
			\end{scope}
			
			\draw[e] (1a) -- (2a);
			\draw[eh] (2a) -- (3a);
			\draw[eh] (3a) -- (1a);
			\draw[eh] (1a) -- (xa);
			\draw[eh] (2a) -- (ya);
			
			\draw[eh] (1b) -- (2b);
			\draw[e] (2b) -- (3b);
			\draw[e] (3b) -- (1b);
			\draw[eh] (1b) -- (xb);
			\draw[eh] (2b) -- (yb);
			
			\begin{scope}[shift={(4.5,0.5)}]
				\node[k] (1B) at (0,0) {};
				\node[k] (2B) at (2,0) {};
				\node[k] (3B) at (1,2) {};
				\node[k] (xB) at (-0.5,1) {};
				\node[k] (yB) at (2.5,1) {};
				
				\draw[e] (2B) -- (3B);
				\draw[e] (3B) -- (1B);
				
				\begin{scope}[shift={(-0.5*0.5+1,-0.5*0.5+0)},scale=0.5]
					\node[k] (1a1) at (0,0) {};
					\node[k] (2a1) at (1,0) {};
					\node[k] (3a1) at (0.5,1) {};
				\end{scope}
				\begin{scope}[shift={(-0.5*0.5-0.25,-0.5*0.5+0.5)},scale=0.5]
					\node[k] (1a2) at (0,0) {};
					\node[k] (2a2) at (1,0) {};
					\node[k] (3a2) at (0.5,1) {};
				\end{scope}
				\begin{scope}[shift={(-0.5*0.5+2.25,-0.5*0.5+0.5)},scale=0.5]
					\node[k] (1a3) at (0,0) {};
					\node[k] (2a3) at (1,0) {};
					\node[k] (3a3) at (0.5,1) {};
				\end{scope}
				
				\draw[e] (1a1) -- (2a1);
				\draw[eh] (2a1) -- (3a1);
				\draw[eh] (3a1) -- (1a1);
				\draw[eh] (1a1) -- (1B);
				\draw[eh] (2a1) -- (2B);
				\draw[e] (1a2) -- (2a2);
				\draw[eh] (2a2) -- (3a2);
				\draw[eh] (3a2) -- (1a2);
				\draw[eh] (1a2) -- (xB);
				\draw[eh] (2a2) -- (1B);
				\draw[e] (1a3) -- (2a3);
				\draw[eh] (2a3) -- (3a3);
				\draw[eh] (3a3) -- (1a3);
				\draw[eh] (1a3) -- (2B);
				\draw[eh] (2a3) -- (yB);
				
			\end{scope}

		\end{tikzpicture}
		\caption{Example construction of $I \times J$. Edges with $\fmin_e=1$ are red, other edges are dashed.}\label{fig:ItimesJ}
	\end{figure}
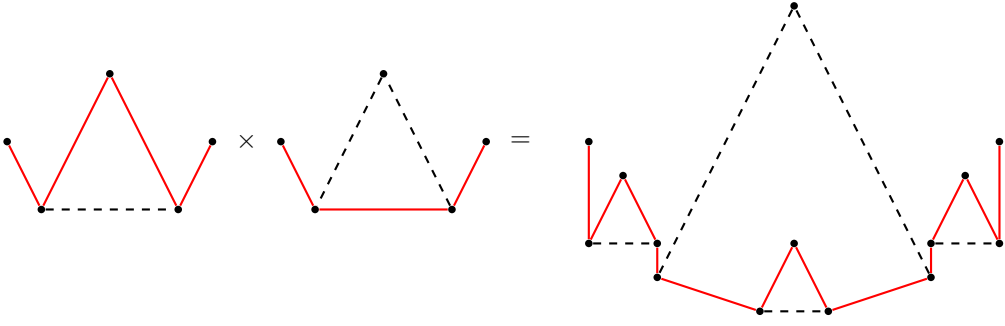
	
	For an \LPAL{} instance $I$ and a number $k\in \mathbb{N}_{\geq 1}$ we define $I^k$ as the repeated product $((I \times I) \times ...) \times I$ of $k$ factors.
	If $I$ can be solved using a single line, $I^k$ can as well. Otherwise $I^k$ requires at least $k+1$ lines. 
	$I^k$ contains at most $n^{2k}$ vertices.
	
	Now assume that for some $\alpha \in [1,\infty)$, \LPAL{} can be approximated within $\alpha$ using an algorithm $A$ in polynomial time. Define $k:= \lfloor \alpha \rfloor$. We show how to decide the Hamiltonian path problem in polynomial time, implying $P=NP$. 
	
	Let $G$ be a directed graph. Apply the reduction from \cite{GHS16} to $G$ to obtain an \LPAL{} instance $I_0$ that is solvable using a single line if and only if $G$ has a Hamiltonian path. If $I_0$ has more than two antennae, we know $G$ has no Hamiltonian path. If $I_0$ has two or fewer antennae, we consider all possible ways to attach the missing antennae (if $I_0$ is nice, there is only one way, i.e.\ attaching none). This gives us a list $L$ of at most $n^2$ nice instances. If $I_0$ is solvable using a single line, then some $I \in L$ is too. We repeat the following for every $I\in L$:
	
	First construct $I^k$. This is possible in polynomial time, since $k$ does not depend on $n$.
	Apply $A$ to $I^k$ to obtain an approximately optimal line concept that has cost $x$. 
	If $I$ can be solved using one path, the minimal cost of solving $I^k$ is 1. Hence $x \leq \alpha$.
	Otherwise the minimal cost of solving $I^k$ is $k+1$, hence $x\geq k+1>\alpha$.
	It follows that by comparing $x$ to $\alpha$, we can determine whether $I$ can be solved using one path.
	
	If none of the $I\in L$ can be solved using one path, then $G$ has no Hamiltonian path. Otherwise we know $G$ has a Hamiltonian path.
\end{proof}

As this hardness result is weaker, we could hope to find an approximation algorithm where the error grows only very slightly in $n$. This is an interesting open problem.

\section{Optimal line planning for stars}\label{sec:poly}

While \LPAL{} is NP-hard for paths if $\dfix>0$, the problem is easier when no frequency-independent costs are considered, i.e., for $\dfix=0$. Here, the costs do not increase if edges are covered by  multiple lines, ending at different terminals. We can show that optimal solutions have a special structure by rewriting the cost function
\begin{equation}
\cost((\cL,f)) = \underbrace{\dfix}_{=0} \cdot |\cL| + \sum_{\ell \in \cL} \cost_{\ell} \cdot  f_{\ell} 
= \sum_{e\in E}c_e \cdot \ftotal_e + \cfix \cdot \sum_{\ell \in \cL} f_l . \label{eq:cost_star}
\end{equation} 

As all edges in a star are incident to a central vertex, there is an optimal solution where each edge $e \in E$ is covered exactly $\femin$ times, i.e., $\ftotal_e=\femin$. % in an optimal solution. 
Thus, it remains only to minimize the frequency-dependent fixed costs $\cfix\cdot \sum_{\ell \in \cL} f_\ell$ in \eqref{eq:cost_star}. As each line contains either one or two edges and two-edge lines reduce the costs by $\cfix$, this is equivalent to minimizing the total frequency of one-edge lines.

It is easy to see that each of the following conditions guarantees optimality of the line concept as in each case as many edges as possible are ``paired up'' to two-edge lines:
\begin{enumerate}
	\item There is no one-edge line.
	\item There is one one-edge line with frequency one.
	\item There is an edge with $\bar{e} \in E$ with $\fmin_{\bar{e}}> \sum_{e \in E\setminus \{\bar{e}\}} \femin$ and 
					%$ \cfix\cdot \sum_{\ell \in \cL}f_\ell =\cfix \cdot \fmin_{\bar{e}}.$
					$\sum_{\ell \in \cL}f_\ell =\fmin_{\bar{e}}.$
\end{enumerate}

\begin{algorithm}[h]
	\caption{Finding an optimal solution to \LPAL{} for stars}\label{alg-star} %with $\dfix=0$, $c_e > 0$ $\forall e \in E$, $\cfix>0$}\label{alg-star}
	\textbf{Input:} An instance $(G, \dfix, \cfix, c, \fmin, \fmax)$ where $\dfix=0$ and $G=(V,E)$ is a star.
	\begin{algorithmic}[1]
		\State $\elist =[e_1, \ldots, e_m]$ list of edges in $E$, sorted decreasingly by $\femin$, $\bar{E} = \emptyset$, $\fmint=\fmin$
		\State $f_{(e)}=0$, $f_{(e_i,e_j)}=0$ for all $e,e_i,e_j \in E$, $i>j$ 
		\For{$e_k \in \elist$} \label{alg:loop1}
		%\If{there is $\ell$ with $f(\ell) >0$, $E(\ell)=\{\bar{e}\}$}
		\If{there is $\bar{e} \in \bar{E}$}				
		\State $a = \min \{f_{(\bar{e})}, \fmin_{e_k}\}$
		\State $f_{(\bar{e})} \mathrel{{-}{=}} a$, $f_{(e_k, \bar{e})} = a$
		\State $\fmint_{e_k} \mathrel{{-}{=}} a$	
		\If{$f_{(\bar{e})}=0$}
		\State $\bar{E}=\emptyset$
		\EndIf
		\EndIf
		%\For{$\ell$ with $E(\ell)=\{e_1,e_2\}$, $e \notin E(\ell)$}
		\For{$e_i,e_j \in \{e_1, \ldots, e_{k-1}\}$ with $i>j$, $f_{(e_i,e_j)}>0$ and $\fmint_{e_k} >1$}			
		%\If{$\fmint_{e_k} >1$}
		\State $b = \min \left\{ f_{(e_i,e_j)}, \Bigl\lfloor \frac{\bar{f}_{e_k}^{\min}}{2}\Bigr\rfloor \right\}$
		
		\State $f_{(e_i,e_j)} \mathrel{{-}{=}} b$, $f_{(e_k, e_i)} \mathrel{{+}{=}} b$, $f_{(e_k, e_j)} \mathrel{{+}{=}} b$
		\State $\bar{f}_{e_k}^{\min} \mathrel{{-}{=}} 2\cdot b$	
		%\Else
		%	\State break
		%\EndIf
		\EndFor	
		\If{$\fmint_{e_k} >0$}
		\State $f_{(e_k)} = \fmint_{e_k}$, $\bar{E}=\{e_k\}$
		\EndIf		
		\EndFor
		\State $\cL =\{(e_i,e_j) \colon f_{(e_i,e_j)}>0\} \cup \{(e)\colon f_{(e)}>0\}$, $f = f|_\cL$
		\State Output $(\cL,f)$
	\end{algorithmic}
\end{algorithm}

In \autoref{alg-star}, we present a polynomial time algorithm that finds an optimal solution to \LPAL. % satisfying one of the conditions above. 
Starting with a list of edges sorted by decreasing $\femin$, \LPAL{} is iteratively solved for the first $k$ edges, $k \in \{1, \ldots, |E|\}$ such that one of the optimality conditions 1, 2 or 3 is satisfied at the end of each iteration. 
The one-edge lines with positive frequency are stored in the set $\bar{E}$ which never contains more than one edge. 

After iteration 1, $\bar{E} = \{e_1\}$ and condition~3 is satisfied.
In iteration $k$, the edge $e_k$ is paired up with edge $\bar{e} \in \bar{E}$ creating a new two-edge line if $\bar{E}$ is not empty. If $f_{(\bar{e})} > \fmin_{e_k}$, $\fmint_{e_k}$ is reduced to zero, $\bar{E} = \{\bar{e}\}$ and condition~3 is satisfied. If $f_{(\bar{e})} = \fmin_{e_k}$, $\fmint_{e_k}$ and $f_{(\bar{e})}$ are reduced to zero, $\bar{E} = \emptyset$ and condition~1 is satisfied. %Otherwise, 
If $f_{(\bar{e})} < \fmin_{e_k}$or $\bar{E} = \emptyset$ in line~4, $\bar{E} = \{e_k\}$ in the for-loop starting in line~12 and we have to show that at the end of the iteration either condition~1 or 2 is satisfied. As the list of edges is sorted by decreasing $\femin$, we know that the total frequency of all already constructed lines is at least $\frac{\fmin_{e_k}}{2}$ such that we can split already existing lines and create two new ones containing $e_k$. Thus, in line~17 $\fmint_{e_k}$ is either zero or one, such that optimality condition~1 or 2 is satisfied and we get the following theorem.  

\begin{theorem}\label{thm:stars_poly}
	\autoref{alg-star} finds an optimal solution to \LPAL{} for stars with $\dfix=0$ in $\mathcal{O}(n^3)$. % polynomial time.
\end{theorem}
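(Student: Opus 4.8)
The statement has two parts — correctness of the output and the $\mathcal{O}(n^3)$ running-time bound — and the running time is the routine part: the main loop of line~\ref{alg:loop1} is executed once per edge, hence $\mathcal{O}(n)$ times, and each pass is dominated by the inner loop over all pairs $e_i,e_j$ with $i>j$ among the already-processed edges, of which there are at most $\binom{n}{2}=\mathcal{O}(n^2)$, each handled in constant time; the initial sort and the final extraction of $\cL$ are cheaper, so the total is $\mathcal{O}(n^3)$. I would dispose of this in one line and devote the rest of the proof to correctness, leaning on the two facts established before the theorem: that it suffices to exhibit a feasible line concept in which every edge $e$ is covered exactly $\fmin_e$ times and which satisfies one of the optimality conditions~1--3.

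The core of the correctness proof is a loop invariant, proved by induction on $k$: after the $k$-th pass of the loop of line~\ref{alg:loop1}, the lines with positive frequency recorded in the $f$-variables form a feasible line concept for the star restricted to $\{e_1,\dots,e_k\}$ in which $\ftotal_e=\fmin_e$ for every such edge; this concept satisfies one of conditions~1--3 for the restricted instance (hence is optimal for it); $\bar{E}$ contains at most one edge; the unique one-edge line, if any, is $(\bar{e})$ for the edge $\bar{e}\in\bar{E}$ and carries all of the one-edge-line frequency; and condition~3 holds precisely when $\bar{E}=\{\bar{e}\}$ with $\fmin_{\bar{e}}$ strictly exceeding the sum of the other processed $\fmin$-values and $f_{(\bar{e})}$ equal to that excess. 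The base case $k=1$ is immediate: the concept is the single line $(e_1)$ of frequency $\fmin_{e_1}$, $\bar{E}=\{e_1\}$, and condition~3 holds trivially.

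The inductive step is where the real work, and the main obstacle, lies. Writing $O_{k-1}$ and $T_{k-1}$ for the total one- and two-edge-line frequencies after pass $k-1$, the invariant gives $O_{k-1}+2T_{k-1}=\sum_{i\le k-1}\fmin_{e_i}$ with $O_{k-1}\in\{0,1\}$ or equal to a dominating edge's excess, and I would case on which of conditions~1--3 held after pass $k-1$ (and, under condition~3, on whether $f_{(\bar{e})}\le\fmin_{e_k}$). One first checks the bookkeeping: the pairing step in lines~5--7 and the splitting step in the inner loop each keep $\ftotal_e=\fmin_e$ on all previously processed edges while raising $\ftotal_{e_k}$ by $a$ and by $2b$ respectively. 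The crucial quantitative point — the one flagged informally before the theorem, and the hard part of the whole proof — is that the two-edge lines built on $\{e_1,\dots,e_{k-1}\}$ carry total frequency $T_{k-1}$ that is large enough to absorb, via splits, all but at most one unit of the residual demand $\fmint_{e_k}$ remaining after the pairing step, i.e.\ $T_{k-1}\ge\lfloor(\fmin_{e_k}-a)/2\rfloor$. This follows because the edge list is sorted by decreasing $\fmin$, so $\sum_{i\le k-1}\fmin_{e_i}\ge\fmin_{e_1}\ge\fmin_{e_k}$, combined with the small value of $O_{k-1}$ and, in the condition~3 case, with the fact that the pairing step cancels exactly the dominating edge's excess against part of $\fmin_{e_k}$. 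Consequently pass $k$ ends with $\fmint_{e_k}\in\{0,1\}$: value $0$ restores condition~1, value $1$ adds a single one-edge line $(e_k)$ of frequency $1$ with $\bar{E}=\{e_k\}$, giving condition~2; the one exception is the sub-case in which the dominating edge $\bar{e}$ survives the pairing step with positive leftover, where one verifies directly that $\bar{e}$ still dominates $\{e_1,\dots,e_k\}$, that $f_{(\bar{e})}$ still equals its excess, and that condition~3 is maintained. I would also record that $e_k$ itself can never be dominating once $k\ge2$, since $\fmin_{e_k}\le\fmin_{e_1}\le\sum_{i\le k-1}\fmin_{e_i}$, so a fresh condition~3 is never created — it only ever persists through the surviving-$\bar{e}$ sub-case.

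Instantiating the invariant at $k=m$ then yields a feasible line concept for the whole instance with $\ftotal_e=\fmin_e$ for all $e$ that satisfies one of conditions~1--3, hence is cost-optimal by the discussion preceding the theorem; together with the $\mathcal{O}(n^3)$ bound this proves the claim. (Edges with $\fmin_e=0$, which the sort places last, are simply skipped and left uncovered, which is consistent with the invariant.)
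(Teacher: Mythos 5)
Your proof is correct and follows essentially the same route as the paper's: reduce optimality to the three sufficient conditions stated before the theorem, maintain that $\bar{E}$ holds at most one edge so at most one one-edge line survives, and use the decreasing sort of $\fmin$ to show the accumulated two-edge lines have enough total frequency to absorb all but at most one unit of $\fmint_{e_k}$ in the inner loop. The only difference is presentational — you phrase it as an explicit loop invariant proved by induction on $k$, while the paper argues about the final output and derives the same counting inequality by contradiction.
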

\begin{proof}
	Note that \autoref{alg-star} computes a line concept that covers each edge $e \in E$ exactly $\femin$ times, i.e.\ $\ftotal_e =\femin$. To prove optimality for $\dfix=0$, we therefore only have to show that the total frequency of one-edge lines is minimized.
	
	At the start of each for loop in line~\ref{alg:loop1}, $\bar{E}$ contains the edges for which a one-edge line with positive frequency exists. Note that there is always at most one edge $\bar{e} \in \bar{E}$, as by the choice of $a$ in line~5, $\femin$ can only be positive if $f_{(\bar{e})}$ is set to zero. Thus the line concept $(\cL,f)$ created in line~22 contains at most one one-edge line with positive frequency. %As $\elist$ is sorted according $\fmin$, for all but the first edge $\femin$ is either zero or one in line~21, such that only the first edge can 
	\begin{itemize}
		\item If there is no one-edge line, the line concept is optimal as in condition~1. 
		\item If there is a one-edge line containing the first edge $e_1$ of $\elist$, i.e.\ the edge with the highest $\fmin$, then in line~5 the minimum $a$ is always chosen as $\femin$ for $e \neq e_1$, i.e.\ $\fmin_{e_1} > \sum_{e \neq e_1} \femin$. % such that $(\cL,f)$ is optimal.
		In this case, all lines contain edge $e_1$ and thus $\sum_{\ell \in \cL} f_\ell = \fmin_{e_1}$ such that $(\cL,f)$ is optimal, see condition~3.
		\item  Otherwise, there is a one-edge line that does not contain the first edge. Here, we show that for any $e_k \neq e_1$ in the outer-loop (lines~3 to 20) with  $\fmint_{e_k}>0$ in line~17 also  $\fmint_{e_k}=1$ holds. Then, we have one one-edge line with frequency one and the line concept is optimal according to condition~2.
		
		As $\fmint_{e_k}$ is only reduced in the algorithm, $\fmint_{e_k}>0$ can only hold in line~17 if it already holds before the for-loop starting in line~12.  Note that in this case, $k >2$ holds. We want to show that $\fmint_{e_k}$ is reduced in the for-loop (lines~12 to 16) until $\fmint_{e_k}\in \{0,1\}$. Suppose to the contrary, that $\fmint_{e_k} > 1$ in line~17. Then the minimum $b$ chosen in line~13 always has been chosen as $f_{(e_i,e_j)}$ and we get
		\[\sum_{\substack{(e_i,e_j):\\i,j <k}} f_{(e_i,e_j)} < \alpha\]
		where $\alpha$ is the value of $\fmint_{e_k}$ before starting the for-loop in line~12. We know that $\alpha = \fmin_{e_k}$ if $\bar{E} = \emptyset$ in line~4 and $\alpha =  \fmin_{e_k} - f_{(e_k, \bar{e})}$ if $\bar{E} = \{\bar{e}\}$ in line~4. To simplify the notation in the following we set $f_{(e_k,\bar{e})}=0$ if $\bar{E}=\emptyset$. As at the beginning of the for-loop in line~4 for $e_k$ all edges $e_i$, $i \in \{1, \ldots, k-1\}$, are covered $\fmin_{e_i}$-times we get
		\[\sum_{\substack{(e_i,e_j):\\i,j <k}} f_{(e_i,e_j)} + f_{(e_k,\bar{e})} \geq  \frac{1}{2}\sum_{i=1}^{k-1}\fmin_{e_i} \geq \frac{1}{2} \cdot 2\cdot \fmin_{e_k} = \fmin_{e_k}\]
		and thus 
		\[\sum_{\substack{(e_i,e_j):\\i,j <k}} f_{(e_i,e_j)} \geq  \fmin_{e_k} -f_{(e_k,\bar{e})} = \alpha\]
		which is the desired contradiction.
	\end{itemize}

	The runtime of \autoref{alg-star} can be 	estimated in the following way: There are $|E|=|V|-1=n-1$ iterations of the outer for-loop starting in line~3 and $\mathcal{O}(n^2)$ iterations of the inner for-loop starting in line~12. As sorting $\elist$ in line~1, initializing the frequencies in line~2 and reconstructing the line concept in line~22 are also in $\mathcal{O}(n^3)$, the total runtime of \autoref{alg-star} is$\mathcal{O}(n^3)$. 
\end{proof}

\section{Optimal line planning for trees}\label{sec:trees}
Since paths are special instances of trees, \LPAL{} is NP-hard on trees by \autoref{thm:dhard_path}.
If we assume that $\dfix = 0$ and that $\fmax$ is bounded by a constant~$b$, then we can provide a pseudo-linear time algorithm for finding the optimal objective value of \LPAL{} on trees.
\begin{theorem} \label{thm:trees_linear}
	If $T$ is a tree, $\dfix = 0$, and $\fmax$ is bounded by a constant $b$, then the minimal cost for \LPAL{} can be computed in $\mathcal{O}(nb^3)$. An optimal line concept can be computed in~$\mathcal{O}(n^3b^3)$.
\end{theorem}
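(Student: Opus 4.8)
The plan is to use the hypothesis $\dfix=0$ to turn \LPAL{} on a tree into a local optimisation over edge \emph{coverages}, and to solve that optimisation by a bottom-up dynamic program. First I would observe that when $\dfix=0$, replacing a line $\ell$ of frequency $f_\ell$ by $f_\ell$ parallel copies of frequency $1$ changes neither feasibility nor cost; hence a line concept may be identified with a multiset $\mathcal P$ of simple paths, with $\cost(\mathcal P)=\cfix\cdot|\mathcal P|+\sum_{e\in E}c_e g_e$, where $g_e:=|\{P\in\mathcal P:e\in E(P)\}|$ and feasibility is $g_e\in[\femin_e,\femax_e]$ (so $g_e\le b$). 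The second reduction step is to compute, for a fixed feasible coverage vector $(g_e)$, the minimum possible $|\mathcal P|$: thinking of each incident edge $e$ of a vertex $v$ as offering $g_e$ slots, a path through $v$ occupies two slots on two distinct incident edges and a path ending at $v$ occupies one; conversely, any such slot-pairing at every vertex traces out non-backtracking walks, which in a tree are exactly simple paths, the unpaired slots being their endpoints. So the minimum number of paths realising $(g_e)$ is $\tfrac12\sum_v u_v$ with $u_v=\max\!\bigl(2\max_{e\ni v}g_e-\sum_{e\ni v}g_e,\ \sum_{e\ni v}g_e\bmod 2\bigr)$, the standard estimate for how many elements stay unpaired when pairing elements from distinct groups. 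The optimal value of \LPAL{} is therefore $\min_{(g_e)\text{ feasible}}\bigl[\sum_e c_e g_e+\tfrac{\cfix}{2}\sum_v u_v\bigr]$, an objective that splits into one term per edge plus one term per vertex, the latter depending only on the coverages of that vertex's incident edges.

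Such an objective is optimised by a tree DP over the edge variables. Rooting $T$ (adding, say, a dummy parent edge of forced coverage $0$ at the root), I would define $D_v(g)$, for $g\in\{0,\dots,b\}$ and a vertex $v$ with children $c_1,\dots,c_d$, to be the minimum over feasible coverages of the edges inside $T_v$ of the cost those edges contribute plus $\tfrac{\cfix}{2}u_v$ evaluated as if $v$'s parent edge had coverage $g$. Then $D_v(g)=\min_{g_1,\dots,g_d}\bigl[\sum_{i}\bigl(D_{c_i}(g_i)+c_{e_{c_i}}g_i\bigr)+\tfrac{\cfix}{2}u_v(g;g_1,\dots,g_d)\bigr]$, with $g_i$ ranging over $[\femin_{e_{c_i}},\femax_{e_{c_i}}]$, and the optimal cost is $D_{\mathrm{root}}(0)$.

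The crux — and the step I expect to be the main obstacle — is evaluating this transition without iterating over all $(g_1,\dots,g_d)\in\{0,\dots,b\}^d$, which is exponential in the degree. The point is that $u_v(g;g_1,\dots,g_d)$ depends on $g_1,\dots,g_d$ only through $M=\max_i g_i$ and $\Sigma=\sum_i g_i$, and in fact only through $\Sigma$ truncated at $2b$ together with $\Sigma\bmod 2$: once $\Sigma>2b$ we have $g+\Sigma>2\max(g,M)$, so $u_v$ is just the parity of $g+\Sigma$. Hence the transition is computed by a secondary DP that processes $c_1,\dots,c_d$ one at a time with state $\bigl(M,\ \min(\Sigma,2b+1),\ \Sigma\bmod 2\bigr)$ — only $O(b)$ states — storing $\min\sum_{i}\bigl(D_{c_i}(g_i)+c_{e_{c_i}}g_i\bigr)$, each of whose $O(\deg(v)\,b)$ transitions tries the $\le b$ admissible values of the next coverage. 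This costs $O(\deg(v)\,b^3)$ per vertex, and $\sum_v\deg(v)=O(n)$ yields the claimed $\mathcal O(nb^3)$ for the optimal value.

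Finally, to produce an optimal line concept I would backtrack through both DP layers to recover an optimal coverage vector $(g_e)$ (still within $\mathcal O(nb^3)$), and then realise it explicitly by performing the slot-pairing at every vertex and tracing the resulting non-backtracking walks, obtaining an optimal multiset of unit-frequency paths. Carefully accounting for the construction and output of all these paths — and, if a line concept with pairwise distinct lines is required, deduplicating them — gives the stated $\mathcal O(n^3b^3)$ bound. I expect everything except the compressed child-aggregation in the transition step to be routine; that compression is exactly what keeps the per-vertex work polynomial.
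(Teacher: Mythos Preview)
Your approach is correct and takes a genuinely different route from the paper. The paper's DP state at a rooted subtree $(T',r')$ is the number of line-ends at $r'$ (a value in $\{0,\dots,b\}$), and the tree is assembled by two primitives---attach a single parent edge, and merge two subtrees at a common root where one has root-degree~$1$---each with its own recurrence (Lemma~\ref{lem: dynamic-program-tree}). You instead fix the \emph{coverage of the parent edge} as the state, after first rewriting the objective as edge costs plus the closed-form vertex penalty $u_v=\max\bigl(2\max_{e\ni v}g_e-\sum_{e\ni v}g_e,\ \sum_{e\ni v}g_e\bmod 2\bigr)$, and then aggregate children one by one in a secondary DP. Both schemes avoid the exponential-in-degree blow-up by processing one child at a time; yours has the advantage that the global structure (optimal cost $=$ sum of edge terms plus sum of purely local vertex defects) is explicit and the reconstruction via slot-pairing is clean, while the paper's version avoids the separate analysis of path decompositions on trees and works directly with line concepts throughout.

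One slip to repair: you assert that $u_v$ depends on the children ``only through $\Sigma$ truncated at $2b$ together with $\Sigma\bmod 2$'' and count ``only $O(b)$ states'' for $(M,\min(\Sigma,2b{+}1),\Sigma\bmod 2)$. The first claim is false when $\Sigma\le 2b$ (take $g=0$ with child coverages $(3,1)$ versus $(2,2)$: same $\Sigma$, different $u_v$), and the state space is $O(b^2)$, not $O(b)$---which is presumably why you \emph{did} keep $M$ in the state. With the corrected count, $O(b^2)$ states times $\deg(v)$ children times at most $b{+}1$ choices per child gives exactly your $O(\deg(v)\,b^3)$ per vertex, so the headline bound $O(nb^3)$ is unaffected.
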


\subparagraph{Intuition.}
It is well known that a rooted tree $(T,r)$ can be constructed from the set of its leafs by starting with the set 
$\{((\{v\}, \emptyset), v) \colon \text{$v$ is a leaf of $T$}\}$ of rooted singleton trees for all leafs of $T$ and iteratively introducing parents and merging subtrees. For our dynamic program it is crucial that we restrict these operations further. We modify the set of rooted subtrees by the following two operations:
\begin{itemize}
	\item \emph{introduce a parent:} a subtree $(T',r')$ can be extended by a parent $p \in V(T) \setminus V(T')$ if $p$ is the only neighbor of $r'$ that is not contained in $V(T')$,  
	\item \emph{merge:} two subtrees $(T_1,r')$, $(T_2,r')$ can be merged at the same root $r'$ if $r'$ has only one child in one of the trees $T_1$ or $T_2$.
\end{itemize}
When no further operation of these types can be applied anymore, the set of subtrees only contains $(T,r)$ as desired.
   
We exploit that there exists an optimal solution for \LPAL{} with the following property: The restriction of this optimal solution to a rooted tree $(T',r')$ arising in the above construction satisfies that at most $b$ lines end in $r'$ (otherwise a merge of two such lines would give a solution of lower costs).
We compute the optimal value for \LPAL{} using the above construction where
each subtree has a table which stores its optimal solutions, considering any possible number of lines ending in its root.
If $(\mathcal{L},f)$ is a line concept for $T$, then for each $v \in V(T)$ we define \emph{the number of lines ending at $v$} as \[\lineends{v}((\mathcal{L},f)) \coloneqq \sum_{\substack{\ell \in \mathcal{L} \colon \text{$v$ is} \\ \text{an end of $\ell$}}}f_{\ell}\]
where we allow zero-edge lines.
The cost of an optimal solution satisfying 
$\lineends{v} \geq k$ is
\[\cost(T \mid \lineends{v} \geq k) \coloneqq \min\{\cost((\mathcal{L},f)) \mid (\mathcal{L},f)\in \feasible(T),\; \eta_v((\mathcal{L},f)) \geq k\}. \]
We compute the \emph{cost vector} 
\[\cost(T',r') := (\cost(T' \mid \lineends{r'} \geq 0), \cost(T' \mid \lineends{r'} \geq 1),  \dots, \cost(T' \mid \lineends{r'} \geq b))\] 
for each rooted subtree $(T',r')$ appearing in the above construction. The recursive computation  stores intermediate results in a table to avoid re-computation.
Finally, the cost of an optimal line concept for $T$ is $\cost(T \mid \lineends{r} \geq 0)$.

\begin{lemma} \label{lem: dynamic-program-tree}
	Let $(T',r')$ be a rooted tree and $k \in \{1, \dots, b\}$.
	\begin{enumerate}
		\item \label{itm: trivial-tree} If $|V(T')| = 1$, then
		$\cost(T' \mid \lineends{r'} \geq k) = k \cdot \cfix$.
				The time required to compute \linebreak $\cost(T' \mid \lineends{r'} \geq k)$ is $\mathcal{O}(1)$ and, hence, the time required to compute  $\cost(T',r')$ is $\mathcal{O}(b)$.
		\item \label{itm: one-child} If $\deg_{T'}(r') = 1$ and $u$ denotes the child of $r'$ in $T'$, then $\cost(T' \mid \lineends{r'} \geq k)$ equals
		\[\min_{0 \leq m \leq \max\{k, \fmin_{\{u,r'\}}\}}\{ \cost(T'-r' \mid \lineends{u} \geq m) + \max\left\{k, \fmin_{\{u,r'\}}\right\} \cdot (\cfix+c_{\{u,r'\}}) - m \cdot \cfix  \}.\]
		If the values $\cost(T'-r' \mid \lineends{u} \geq m)$ are pre-computed for all $m \in \{1, \dots, b\}$, then
		the time required to compute $\cost(T' \mid \lineends{r'} \geq k)$ is $\mathcal{O}(b)$ and, hence, $\cost(T',r')$ can be computed in $\mathcal{O}(b^2)$ time.
		\item \label{itm: merge} If $(T',r')$ is the union of two rooted trees $(T_1,r'), (T_2,r')$ where $\deg_{T_1}(r') = 1$, then $\cost(T' \mid \lineends{r'} \geq k)$ equals
		\[\min_{\substack{0 \leq m, k_1, k_2 \leq b, \\ k_1+k_2-2m = k}}\{\cost(T_1\mid \lineends{r'} \geq k_1) + \cost(T_2\mid \lineends{r'} \geq k_2) - m \cdot \cfix \}.\]
		If the values $\cost(T_2\mid \lineends{r'} \geq k_2)$ and $\cost(T_2\mid \lineends{r'} \geq k_2)$ are pre-computed for all $k_1, k_2 \in \{1, \dots, b\}$, then
		the time required to compute $\cost(T' \mid \lineends{r'} \geq k)$ is $\mathcal{O}(b^2)$ and, hence it requires $\mathcal{O}(b^3)$ time to compute $\cost(T',r')$.
	\end{enumerate}
\end{lemma}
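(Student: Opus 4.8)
Each of the three statements gives a recurrence for the entries of a cost vector, so I would prove each as an equality ``$\min$ over feasible concepts $=$ explicit $\min$'' by two inequalities: ``$\le$'', exhibiting for every term on the right a feasible concept on $T'$ of that cost with the required value of $\lineends{r'}$, and ``$\ge$'', reading off from an optimal concept on $T'$ one term of the right-hand side that is at most its cost. Two facts are used throughout. Since $\dfix=0$, the cost of a concept changes by exactly $\cfix$ when a line is added or deleted and by $c_e$ when a line starts or stops traversing an edge $e$; in particular merging two lines that share an endpoint (into one line through that vertex) saves exactly one $\cfix$ and leaves every edge frequency unchanged. And by the structural remark preceding the lemma one may restrict attention to optima in which at most $b$ lines end at the root of every subtree of the construction, so the auxiliary indices $m,k_1,k_2$ stay in $\{0,\dots,b\}$. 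Part~1 is then immediate: a one-vertex tree has no edges, so every feasible concept is a collection of zero-edge lines at $r'$, and the cheapest one with $\lineends{r'}\ge k$ uses total frequency $k$ and costs $k\cfix$; the vector is produced in $\mathcal{O}(b)$.

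For Part~2 set $e:=\{u,r'\}$ and $F:=\max\{k,\fmin_e\}$. Since $\deg_{T'}(r')=1$, a line of a concept on $T'$ ends at $r'$ iff it is the zero-edge line at $r'$ or traverses $e$, and every line traversing $e$ ends at $r'$; hence (when $k\le\fmax_e$) an optimal concept can be normalised to have $\ftotal_e=F$ and no zero-edge line at $r'$, by shortening surplus traversals of $e$, which only lowers cost and keeps $\lineends{r'}=\ftotal_e\ge k$ and $\ftotal_e\ge\fmin_e$. For ``$\le$'': given $m$, take an optimal concept on $T'-r'$ with $\lineends{u}\ge m$, extend $m$ of its line-units that end at $u$ along $e$ to end at $r'$, and add $F-m$ fresh one-edge lines $(u,r')$; this is feasible (edge $e$ carries $F\in[\fmin_e,\fmax_e]$), has $\lineends{r'}=F\ge k$, and costs $\cost(T'-r'\mid\lineends{u}\ge m)+Fc_e+(F-m)\cfix$, the $m$-th term. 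For ``$\ge$'': delete $r'$ from the normalised optimum to obtain a feasible concept on $T'-r'$ with $\lineends{u}\ge F$ of cost exactly $\cost(\mathrm{opt})-Fc_e$, whence $\cost(\mathrm{opt})\ge\cost(T'-r'\mid\lineends{u}\ge F)+Fc_e$, which is the $m=F$ term. Each entry is a minimum over $\mathcal{O}(b)$ precomputed numbers, so the vector costs $\mathcal{O}(b^2)$.

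For Part~3 the heart is the decomposition. Since $V(T_1)\cap V(T_2)=\{r'\}$ and $T_1,T_2$ share no edge, and since $\deg_{T_1}(r')=1$ prevents a line from passing through $r'$ using two $T_1$-edges, every line of a concept on $T'$ is contained in $T_1$, or contained in $T_2$, or passes through $r'$ using exactly one edge of each side; splitting each line of the third kind at $r'$ yields a concept on $T_1$ and a concept on $T_2$, both feasible (disjoint edge sets, so frequencies are unaffected). Writing $m$ for the total frequency of such through-lines and $k_i$ for the number of lines ending at $r'$ in the $T_i$-concept so obtained, one gets $k_i\ge m$, $\lineends{r'}=k_1+k_2-2m$ for the original concept, and the two concepts' costs sum to $\cost(\text{original})+m\cfix$ (each through-line is billed $\cfix$ once on each side). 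This gives ``$\ge$'' directly: decompose an optimal concept to get a tuple with $k_1+k_2-2m\ge k$ and term equal to its cost, then shave $k_1,k_2$ down so the sum equals $k$ while keeping $m\le\min\{k_1,k_2\}$ and not increasing the term. For ``$\le$'', given a tuple with $m\le\min\{k_1,k_2\}$, take optimal concepts on $T_1$ and $T_2$, pick $m$ line-units ending at $r'$ on each side, and concatenate them in pairs at $r'$ (each is a simple path, since the sides meet only at $r'$); the result is a feasible concept on $T'$ with $\lineends{r'}\ge k_1+k_2-2m=k$ and the stated cost. Each entry is a minimum over $\mathcal{O}(b^2)$ tuples, so the vector costs $\mathcal{O}(b^3)$.

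The part I expect to be most delicate is making the Part~3 decomposition fully rigorous: one must check that the split really produces valid line concepts on $T_1$ and $T_2$, that the three-way classification of lines is exhaustive precisely because $\deg_{T_1}(r')=1$, and -- importantly -- that the minimisation must be taken over tuples additionally satisfying $m\le\min\{k_1,k_2\}$, since this is exactly the range forced by the decomposition (a tuple with $m>\min\{k_1,k_2\}$ would glue more through-lines than the sub-concepts supply and can make the term spuriously small). The remaining work is routine bookkeeping: tracking line lengths, frequencies and the $\fmin/\fmax$ constraints under shortening, extending and merging, and handling the degenerate cases -- notably $k$ exceeding the relevant $\fmax_e$ in Part~2, where one simply pads an optimum with zero-edge lines, and the role of zero-edge lines in the definitions of the cost vectors generally.
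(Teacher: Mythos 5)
Your parts 1 and 2 are essentially the paper's own argument: every line through the root is split at $u$ into a tail in $T'-r'$ plus a copy of the edge $\{u,r'\}$, the multiplicity of that edge is pushed down to $\max\{k,\fmin_{\{u,r'\}}\}$, and $m$ counts how many copies are re-merged with tails ending at $u$; your two-inequality phrasing and the paper's chain of equalities are interchangeable. (One side remark on the degenerate case $k>\fmax_{\{u,r'\}}$: the paper declares the entry $+\infty$, whereas you pad an optimum with zero-edge lines at $r'$; since the lemma explicitly admits zero-edge lines, your reading is the consistent one, but then the total frequency on $\{u,r'\}$ must be capped at $\fmax_{\{u,r'\}}$ rather than set to $\max\{k,\fmin_{\{u,r'\}}\}$, so the closed form changes in that regime.)

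The substantive point is part 3. What you flag as a delicacy is in fact a genuine error in the recurrence as stated, which the paper's two-sentence proof does not address: the constraint $m\le\min\{k_1,k_2\}$ is necessary, and without it the right-hand side can be strictly smaller than $\cost(T'\mid\lineends{r'}\ge k)$. Concretely, let $T_1$ and $T_2$ each be a single edge at $r'$ with $\fmin=\fmax$ equal to $1$ and $5$ respectively, and take $\cfix=1$, $c\equiv0$, $\dfix=0$, $b=5$, $k=1$. Then $\cost(T_1\mid\lineends{r'}\ge0)=1$ and $\cost(T_2\mid\lineends{r'}\ge5)=5$, so the tuple $(k_1,k_2,m)=(0,5,2)$ satisfies $k_1+k_2-2m=1$ and contributes the value $1+5-2=4$, while every feasible concept on the resulting path costs at least $5$ (the heavy edge alone forces total line frequency $5$). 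The reason is exactly the one you give: the sub-concept realizing $\cost(T_1\mid\lineends{r'}\ge0)$ supplies only one line-end at $r'$, so only one merge can actually be paid for. With $m\le\min\{k_1,k_2\}$ added, your argument is correct: the three-way classification of lines and the splitting of through-lines give the lower bound after shaving $k_1,k_2$ down to make $k_1+k_2-2m=k$ while preserving $k_i\ge m$, which is possible because $(k_1-m)+(k_2-m)\ge(k_1+k_2-2m)-k$; and the upper bound construction now always has enough line-ends on both sides to perform $m$ merges. Since the extra constraint only shrinks the search space, the $\mathcal{O}(b^2)$ cost per entry and the overall bounds of \autoref{thm:trees_linear} are unaffected.
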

\begin{proof}[Proof of \autoref{lem: dynamic-program-tree}] \label{prf:lem: dynamic-program-tree}
	If $(T', r')$ has only one vertex, then clearly the optimal line concept which satisfies that $k$ lines end in $r'$ consists of $k$ zero-edge lines. This implies~\eqref{itm: trivial-tree}.
	
	We prove~\eqref{itm: one-child}.
	Since $\deg_{T'}(r') = 1$ every line $\ell$ in $T'$ is either contained in $T'-r'$ or it has one end in $T'-r'$ and the other end is $r'$.
	In a line concept $(\mathcal{L},f)$ of $T'$, a line $\ell$ with one end in $T'-r'$, the other end being $r'$ and frequency $f_\ell$ can be split into two lines $\ell_1=(r',u)$ and $\ell_2=\ell - r'$ with frequency $f_{\ell}$ without changing the feasibility.
	The line $\ell_2$ is contained in $T'-r'$ and the cost of the line concept is increased by $\cfix \cdot f_\ell$.
	This process can be reversed, merging some line from $T'-r'$ that ends at $u$ with the line $(u,r')$, decreasing the cost accordingly.
	Assuming $k\leq\fmax_{\{u,r'\}}$, this allows us to rewrite $\cost(T' \mid \lineends{r'} \geq k)$: 
	\begin{align*}
		&\cost(T' \mid \lineends{r'} \geq k) = \min\{\cost((\mathcal{L},f)) \colon (\mathcal{L},f)\in \feasible(T'),\; \eta_{r'}((\mathcal{L},f)) \geq k\} \\
		&\overset{(a)}{=} \min\{\cost((\mathcal{L'},f')) + a \cdot (\cfix+c_{\{u,r'\}}) - m \cdot \cfix \colon (\mathcal{L'},f')\in \feasible(T' - r'),\\&\hspace{6em} \fmin_{\{u,r'\}} \leq a \leq \fmax_{\{u,r'\}},  m \leq \eta_u((\mathcal{L'},f')), m \leq a, a \geq k \} \\
		&\overset{(b)}{=} \min\{\cost((\mathcal{L'},f')) + \max\{k, \fmin_{\{u,r'\}}\} \cdot (\cfix+c_{\{u,r'\}}) - m \cdot \cfix \colon (\mathcal{L'},f')\in \feasible(T' - r'), \\&\hspace{6em} \max\{k, \fmin_{\{u,r'\}}\} \leq \fmax_{\{u,r'\}} , m \leq \eta_u((\mathcal{L'},f')), m \leq \max\{k, \fmin_{\{u,r'\}}\} \} \\
		&\overset{(c)}{=} \min_{0 \leq m \leq \max\{k, \fmin_{\{u,r'\}}\}}\{ \cost(T'-r' \mid \lineends{u} \geq m) + \max\{k, \fmin_{\{u,r'\}}\} \cdot (\cfix+c_{\{u,r'\}}) - m \cdot \cfix  \}
	\end{align*}
	(a): We split the lines in $T'$ into some set of lines $\cL'$ on $T'-r'$, and $a$ copies of the line $(u,r')$, from which $m$ are merged with lines from $\cL'$. Then the number of ends at $r'$ is exactly $a$, hence it satisfies $a \geq k$. Furthermore $a \in [\fmin_{\{u,r'\}}, \fmax_{\{u,r'\}}]$. Each merge reduces the cost by $\cfix$.\\
	(b): To minimize the cost, we have to minimize $a$: the only benefit of increasing $a$ is that $m$ can be increased but the factor of $a$ outweighs $m$. Hence we replace $a$ by its minimum possible value $\max\{k, \fmin_e\}$.  \\
	(c): Since $m \leq \eta_{u}((\mathcal{L'},f'))$ we can replace $\cost((\mathcal{L'},f'))$ by $\cost(T'-r' \mid {\lineends{u} \geq m})$. The condition $\max\{k, \fmin_{\{u,r'\}}\} \leq \fmax_{\{u,r'\}}$ is fulfilled by the assumption on $k$. The remaining constraints are written as a subscript.\\
	If $k > \fmax_{\{u,r'\}}$, then $\cost(T' \mid \lineends{r'} \geq k) = \infty$ since no feasible line concept with $\lineends{r'} \geq k$ exists.
	
	The time to compute $\cost(T' \mid \lineends{r'} \geq k)$ for some $k$ is $\mathcal{O}(b)$, since $\max\{k, \fmin_{\{u,r'\}}\} \leq b$. Hence $\cost(T',r')$ can be computed in $\mathcal{O}(b^2)$.
	
	Finally, we prove~\eqref{itm: merge}. Any line in $T'$ that traverses $r'$ can be split into two lines, one contained in $T_1$ and the other contained in $T_2$. In reverse, we can join lines from different subtrees together at $r'$. Then
	\begin{align*}
		&\cost(T' \mid \lineends{r'} \geq k) = \min_{\substack{0 \leq m, k_1, k_2 \leq b, \\ k_1+k_2-2m = k}}\{\cost(T_1\mid \lineends{r'} \geq k_1) + \cost(T_2\mid \lineends{r'} \geq k_2) - m \cdot \cfix \}
	\end{align*}
	Note that at most $b$ lines of $T_1$ end at $r'$ by the degree condition.
	The time required to compute $\cost(T' \mid \lineends{r'} \geq k)$ for some $k$ is $\mathcal{O}(b^2)$ since we have two degrees of freedom in the minimum expression. Hence $\cost(T',r')$ can be computed in $\mathcal{O}(b^3)$.
\end{proof}

\subparagraph{Total runtime.}
A depth-first search algorithm yields a decomposition of $T$ such that the dynamic programming approach can be executed in the corresponding order. Since $T$ is a tree, DFS has a running time of $\mathcal{O}(n)$.
The running time to compute the cost vector for all leaves in the initial set $S$ is in $\mathcal{O}(nb)$ since there are at most $n-1$ leaves in $T$ and by \ref{lem: dynamic-program-tree}.\eqref{itm: trivial-tree}.
We need to introduce a parent in the construction of $T$ exactly $|E(T)| = n-1$ times. Together with \ref{lem: dynamic-program-tree}.\eqref{itm: one-child} this yields that computing the respective cost vectors has a total running time of $\mathcal{O}(nb^2)$.
Finally, the merge operation is performed $\mathcal{O}(\sum_{v \in V(T)} \deg_T(v)) = \mathcal{O}(n)$ times which gives a total running time of $\mathcal{O}(nb^3)$.
Altogether, the dynamic programming has a running time of $\mathcal{O}(nb^3)$.

\subparagraph{Constructing a line concept.}
So far we only showed how to compute the minimal cost among all feasible line concepts. 
To actually construct a line concept with minimal cost, we store in each cost vector entry additionally a line concept of that cost. These line concepts can then also be computed recursively, according to the decisions made (i.e.\ creating zero-edge line, extending lines by a single edge, joining lines). This increases the algorithm runtime, depending on the line concept sizes. On a tree, we can have at most $\mathcal{O}(n^2)$ different paths. It is then possible to compute all cost vectors augmented with line concepts in time $\mathcal{O}(n^3 b^3)$.

Altogether, this proves \autoref{thm:trees_linear}.

Since the runtime of the algorithm depends on $b$, it is \emph{pseudo-polynomial}. For the special case where for all $e\in E$ it holds $\femin=\femax$, we provide a true polynomial time algorithm, which does not depend on a frequency bound $b$.
\begin{algorithm}[h]
\caption{Finding an optimal solution of \LPAL{}  on trees with $\fmin=\fmax$}\label{alg-trees-fixed-frequency} 
	\textbf{Input:} An instance $(G, \dfix, \cfix, c, \fmin, \fmax)$ where $\dfix=0$, $\fmin=\fmax$ and $G=(V,E)$ is a tree.
\begin{algorithmic}[1]
	%\State Initial feasible line concept: $(\cL, f)$ with
	\State $\cL = \{(e) \colon e \in E \}$
	\State $f_{(e)} = \fmin_e$ for all $e \in E$; for all other paths $\ell$ set $f_\ell=0$
	\For{$v \in V$} \label{alg2:for1}
		\State Let $S$ be the star formed by $v$ and its neighbors.
		\State Let $(\cL^S, f^S)$ be the result of \autoref{alg-star} applied to the sub-instance on $S$.
		\State $L_v = \{ \ell \in \cL \colon \ell \text{ ends in }v \}$
		\For{$\ell_1,\ell_2 \in L_v$} \label{alg2:for2}
			\State Let $e_1$ be the edge of $\ell_1$ incident to $v$
			\State Let $e_2$ be the edge of $\ell_2$ incident to $v$
			\If{$e_1 = e_2$}
				\State \textbf{continue}
			\EndIf
			\State $d = \min\{ f^S_{(e_1,e_2)}, f_{\ell_1}, f_{\ell_2} \}$
			\State $\ell_{+} = \ell_1 \cup \ell_2$
			\State $\cL = \cL \cup \{ \ell_{+} \}$
			\State $f^S_{(e_1,e_2)} \mathrel{{-}{=}} d$, $f_{\ell_1} \mathrel{{-}{=}} d$, $f_{\ell_2} \mathrel{{-}{=}} d$, $f_{\ell_{+}} \mathrel{{+}{=}} d$
		\EndFor
	\EndFor
	\State $\cL=\{\ell \in \cL \colon f_{\ell} > 0 \}$, $f = f|_\cL$
	\State Output $(\cL, f)$
\end{algorithmic}
\end{algorithm}
\begin{theorem} \label{thm:trees_fmin=fmax}
	If $G$ is a tree, $\dfix = 0$, and $\femin=\femax$ for all $e\in E$, then \autoref{alg-trees-fixed-frequency} computes an optimal solution to \LPAL{} in $\mathcal{O}(n^3)$.
\end{theorem}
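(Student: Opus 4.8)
The plan is to certify optimality via a lower bound obtained by localising \LPAL{} at each vertex. Since $\femin=\femax$, every feasible line concept $(\cL,f)$ satisfies $\ftotal_e=\femin_e$ for all $e$, so the edge term $\sum_e c_e\ftotal_e$ of \eqref{eq:cost_star} is a constant, and with $\dfix=0$ minimising the cost is the same as minimising $\cfix\cdot\sum_\ell f_\ell$ (if $\cfix=0$ every feasible concept is optimal, so assume $\cfix>0$). Zero-edge lines affect neither feasibility nor cost, so I may assume all lines have at least one edge; then $\sum_\ell f_\ell=\tfrac12\sum_{v\in V}\lineends{v}$, because each line contributes its frequency to the two vertices where it ends. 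For a vertex $v$, let $S_v$ be the star of $v$ and its neighbours carrying the inherited (equal) frequency bounds, and let $M(v)$ be the minimum total frequency of one-edge lines over feasible concepts on $S_v$, i.e.\ the quantity minimised by \autoref{alg-star} (\autoref{thm:stars_poly}). Restricting any feasible concept on $T$ to $S_v$ — a line through $v$ becomes a two-edge star-line, a line ending at $v$ a one-edge star-line — yields a feasible concept on $S_v$ whose total one-edge-line frequency is exactly $\lineends{v}$; hence $\lineends{v}\ge M(v)$ and $\sum_\ell f_\ell\ge\tfrac12\sum_{v}M(v)$ for every feasible concept on $T$.

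It then suffices to show that \autoref{alg-trees-fixed-frequency} attains $\lineends{v}=M(v)$ for all $v$ simultaneously. I would maintain two invariants during the outer loop. \emph{Feasibility:} the current concept always covers each edge exactly $\femin_e$ times — true for the initial concept of one-edge lines, and preserved by every merge, because $\ell_1,\ell_2\in L_v$ end at $v$ via distinct edges and hence (being simple paths in a tree) meet only at $v$, so $\ell_+=\ell_1\cup\ell_2$ is a simple path whose edge set is the union of $E(\ell_1)$ and $E(\ell_2)$, and these are disjoint. \emph{Endpoint stability:} once the outer loop has processed $v$, the vector $\bigl(\text{frequency of lines ending at }v\text{ via }e\bigr)_{e\ni v}$ never changes again. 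Indeed, a merge carried out at some $w\neq v$ involves at most one line through $v$ (two simple paths ending at both $v$ and $w$ would coincide), and it only shifts frequency $d$ from such a line $\ell_1$ to a line $\ell_+\supseteq\ell_1$ that meets $v$ in exactly the same way, leaving the count unchanged; if no line through $v$ is involved, nothing at $v$ changes. Applying endpoint stability \emph{before} $v$ is processed shows that, at the moment the outer loop reaches $v$, the frequency of lines ending at $v$ via each incident edge $e$ still equals its starting value $\femin_e$.

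Now the inner loop. Feasibility of the star solution $(\cL^{S_v},f^{S_v})$ gives, for each edge $e$ at $v$, $\sum_{e'\neq e}f^{S_v}_{(e,e')}=\femin_e-f^{S_v}_{(e)}\le\femin_e$, so at most $\femin_e$ units of frequency can ever be merged away from $e$ while processing $v$; this is exactly what prevents the loop from running out of lines ending at $v$ via $e$ before the corresponding capacities are exhausted. Concretely: if after the inner loop some capacity $f^{S_v}_{(e_1,e_2)}$ were still positive, then every line ending at $v$ via $e_1$ (or every one via $e_2$) must now have frequency $0$, so all of $\femin_{e_1}$ (resp.\ $\femin_{e_2}$) has been consumed by merges, which by the displayed equality forces \emph{every} capacity $f^{S_v}_{(e_1,\cdot)}$ (resp.\ $f^{S_v}_{(\cdot,e_2)}$) — in particular $f^{S_v}_{(e_1,e_2)}$ — to have been used up, a contradiction. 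Hence, after $v$ is processed, exactly $f^{S_v}_{(e,e')}$ lines turn at $v$ between each pair of edges, so exactly $f^{S_v}_{(e)}$ lines end at $v$ via $e$, giving $\lineends{v}=\sum_e f^{S_v}_{(e)}=M(v)$, which by endpoint stability persists to the output. With the lower bound above, the output is optimal. For the running time: a tree has at most $n-1$ distinct paths ending at a fixed vertex, so $|L_v|\le n-1$ and the inner double loop costs $\mathcal O(n^2)$ per vertex; each call to \autoref{alg-star} on $S_v$ costs $\mathcal O(\deg(v)^3)=\mathcal O(n^2\deg(v))$, summing to $\mathcal O(n^3)$; and representing each line by its pair of endpoints keeps all frequency lookups and updates at $\mathcal O(1)$, so the total is $\mathcal O(n^3)$.

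I expect the main obstacle to be the bookkeeping behind the two invariants together with the ``no running out'' argument: in particular, verifying carefully that a merge at $w$ affects at most one line through any other vertex and preserves its incidence at that vertex, and that the order in which the inner loop enumerates pairs of lines is irrelevant because the only possible failure — a positive leftover capacity — is ruled out by a global counting argument that relies on feasibility of the star solution $(\cL^{S_v},f^{S_v})$.
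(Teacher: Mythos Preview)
Your proposal is correct and follows essentially the same approach as the paper: reduce optimality to minimising $\sum_v \lineends{v}$, bound each $\lineends{v}$ below by the optimal star value $M(v)$ via restriction, maintain feasibility and an endpoint-stability invariant across outer iterations, and show that the inner loop drives every two-edge star capacity $f^{S_v}_{(e_1,e_2)}$ to zero so that $\lineends{v}=M(v)$ is attained. The only noteworthy difference is in the inner-loop contradiction: the paper tracks an explicit running invariant $\sum_{\ell\in L_{v,e}} f_\ell = f^{S_v}_{(e)} + \sum_{e'\neq e} f^{S_v}_{(e,e')}$ throughout the loop and reads off positive-frequency witnesses $\ell_1,\ell_2$ directly from it, whereas you argue by counting that exhausting all $\femin_{e_1}$ worth of line-ends would force every capacity $f^{S_v}_{(e_1,\cdot)}$ to be used up --- this works because each unit subtracted from a line via $e_1$ is matched by a unit subtracted from some $f^{S_v}_{(e_1,\cdot)}$, a bookkeeping step you leave implicit but which is immediate from line~16 of the algorithm.
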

\begin{proof}
	The key idea of \autoref{alg-trees-fixed-frequency} is to apply \autoref{alg-star} iteratively at every vertex. As $\fmin=\fmax$, we can handle lines ending at vertex $v \in V$ in the same way we handle edges in stars: creating a two-edge line in a star corresponds to concatenating two lines in a tree.  
	
	We show that \autoref{alg-trees-fixed-frequency} computes a feasible and optimal solution; then we compute its runtime. 
	
	After line 2, a feasible line concept is constructed. The operations in line 16 simply merge lines, hence the feasibility of $(\cL, f)$ remains.
	
	For showing optimality, we first note that since the total frequencies $\ftotal_e$ are fixed for every $e \in E$, obtaining an optimal line concept $(\cL, f)$ is equivalent to minimizing $\sum_{\ell \in \cL} f_{\ell}$. Since every line has two ends, another equivalent quantity to minimize is the total number of line ends, weighted by $f$, i.e.\ $2\sum_{\ell \in \cL} f_{\ell}$.
	
	Define $L_{v,e} := \{ \ell \in \cL \colon \ell \text{ ends in }v \text{ and traverses }e \}$.
	We need an invariant (I1) that holds before every iteration of the outer for-loop: For every vertex $v\in V$ that has not yet been chosen in the outer for-loop, we have $\femin = \sum_{\ell \in L_{v,e}} f_{\ell}$.
	Clearly (I1) holds directly after executing line 2. The operations during an iteration only affect the local line ends, i.e.\ the number of ends at yet unvisited vertices is unchanged. Hence (I1) is maintained.
	
	Another invariant (I2), that holds before every iteration of the inner loop, for every $e$ incident to $v$, is $\sum_{\ell \in L_{v,e}} f_{\ell} = f^S_{(e)} + \sum_{e' \neq e} f^S_{(e,e')} $. 
	It holds initially, since \autoref{alg-star} produces a feasible line concept, and we have $\femin = f^S_{(e)} + \sum_{e' \neq e} f^S_{(e,e')}$; combine this with (I1) to obtain (I2). Let $e_1$ and $e_2$ be chosen during an iteration, after line 9. The operations inside the loop only affect lines that contain $e_1$ or $e_2$, hence for any $e\notin\{ e_1, e_2\}$ (I2) is maintained. (I2) is also maintained for $e_1$, since $f^S_{(e_1,e_2)}$ and $f_{\ell_1}$ are changed by equal amounts. The same holds true for $e_2$.
	
	We claim that after the inner for-loop finishes, we have $f^S_\ell=0$ for all two-edge lines $\ell=(e_1,e_2)$ of $\cL^S$. This is proved by contradiction: Assume $f^S_{(e_1,e_2)}>0$ for some $e_1 \neq e_2$. 
	Then by (I2), $\sum_{\ell \in L_{v,e_1}} f_{\ell} = f^S_{(e_1)} + \sum_{e' \neq e_1} f^S_{(e_1,e')} > 0$, and similarly $\sum_{\ell \in L_{v,e_2}} f_{\ell} > 0$. Hence two lines $\ell_1\in L_{v,e_1}$ and $\ell_2\in L_{v,e_2}$ exist with $f_{\ell_1}>0$ and $f_{\ell_2}>0$. But then the inner for-loop would have chosen $\ell_1$ and $\ell_2$ at some point, after which $f_{\ell_1}$, $f_{\ell_2}$ or $f^S_{(e_1,e_2)}$ would have been zero, which is the desired contradiction.
	
	Using (I2) again, we have $\sum_{\ell \in L_{v,e}} f_{\ell} = f^S_{(e)}$ after the inner for-loop. This means that we have $\sum_{e \text{ incident to } v} f^S_{(e)} =: x_v$ line ends, with multiplicity, at vertex $v$. Because of the algorithm's locality, this number does not change in further iterations of the outer loop.
	
	In total \autoref{alg-trees-fixed-frequency} produces a line concept with $\sum_{v\in V} x_v$ line ends. Now assume there exists a better solution, i.e.\ a feasible line concept $(\cL', f')$ that has fewer than $x_v$ line ends at some vertex $v$. Then we could restrict $(\cL', f')$ onto the star $S$ around $v$ and would obtain a solution for $S$ which has fewer ends, i.e.\ is better, than what \autoref{alg-star} computed, which contradicts the optimality of \autoref{alg-star}.

	On the runtime: To speed up operations, lines are represented just by their end vertices. Since we are on a tree, this is enough to unambiguously define them. The invocation of \autoref{alg-star} can be done in $\mathcal{O}(\deg(v)^3)$. 
	Since $L_v$ has at most $n$ elements, the for-loop at line \ref{alg2:for2} iterates at most $n^2$ times.
	Every operation inside the for-loop takes constant time and we can bound the total loop runtime by $\mathcal{O}(n^2)$.
	Overall, an iteration of the outer for-loop on a vertex $v$ takes $\mathcal{O}(\deg(v)n^2 )$. Using the fact that on a tree $\sum_{v\in V} \deg(v) = 2n-2$, the total runtime of the algorithm is $\mathcal{O}\left( \sum_{v\in V} \deg(v)^3 + n^2\sum_{v\in V} \deg(v) \right) = \mathcal{O}(n^3)$.
	
\end{proof}

\section{Conclusion and outlook}\label{sec:outlook}
We systematically investigated the complexity of the line planning on all lines problem. Using frequency-independent line costs results in
an
NP-hard problem
even for paths and stars. Without these costs, the problem remains NP-hard on planar graphs but can be solved in polynomial time on trees when $\fmin=\fmax$, and in pseudo-polynomial time otherwise.

\begin{samepage}
\noindent The following are the most pressing open questions:
\begin{itemize}
	\item Is \LPAL{} in NP? It is not clear that, especially when $\fmin$ is very large, the size of an optimal line concept can be bounded by a polynomial in the input size. 
	\item Is there a polynomial time algorithm for \LPAL{} with $\dfix=0$ on trees?
	\item Is there a (pseudo-)polynomial time algorithm for \LPAL{} with $\dfix=0$ on graphs with treewidth 2 (or generally bounded treewidth)?
	\item Under which restrictions exists a constant-factor polynomial-time approximation algorithm for \LPAL{}?
\end{itemize}
\end{samepage}
On graphs of bounded treewidth, many NP-complete problems become easy \cite{DBLP:journals/jal/ArnborgLS91}. \LPAL{} however does not fit into the problem types studied before. We conjecture that a dynamic programming approach, similar to the one we used on trees, can be used on graphs with treewidth 2. However, there are some complications: when combining partial solutions at a larger separator, we need to make sure that merged lines do not form cycles or self-intersections which is by construction not possible in trees.

When moving from trees to graphs of higher treewidth, an additional problem has to be considered: while for trees we can assume that passenger paths are fixed, this is no longer true in general graphs. Thus, looking at line planning from a passenger' perspective, it might be beneficial to replace the lower frequency bounds $\fmin$ by a flow formulation for the passengers as in \cite{borndorfer2007column} such that passengers can choose routes in the network for which the capacity has to be sufficiently high. 
This presents an interesting extension of the problem, where it is especially important to understand the structure of optimal solutions.

\newpage

\bibliography{literature}

\end{document}